\newtheorem{thm}{Theorem}[section]
\newtheorem{prop}[thm]{Proposition}
\newtheorem{cor}[thm]{Corollary}
\newtheorem{lem}[thm]{Lemma}
\theoremstyle{definition}
\newtheorem{defi}[thm]{Definition}
\newtheorem{rem}[thm]{Remark}
\newtheorem{notation}[thm]{Notation}
\newtheorem{exm}[thm]{Example}
\theoremstyle{plain}
\newcommand{\intt}{\mathop{\mathrm{Int}}}
\newcommand{\diag}{\mathop{\mathrm{diag}}}
\newcommand{\lla}{\langle\!\langle}
\newcommand{\rra}{\rangle\!\rangle}
\renewcommand{\phi}{\varphi}
\newcommand{\alt}{\mathop{\mathrm{Alt}}}
\newcommand{\sym}{\mathop{\mathrm{Sym}}}
\newcommand{\End}{\mathop{\mathrm{End}}}
\newcommand{\Hom}{\mathop{\mathrm{Hom}}}
\newcommand{\disc}{\mathop{\mathrm{disc}}}
\newcommand\iso{\xrightarrow{
   \,\smash{\raisebox{-0.3ex}{\ensuremath{\scriptstyle\sim}}}\,}}
\author{A.-H. Nokhodkar}
\date{}
\title
{
Orthogonal involutions and totally singular quadratic forms
in characteristic two
}
\begin{document}
\maketitle

\begin{abstract}
We associate to every central simple algebra with involution of ortho\-gonal type in characteristic two a totally singular quadratic form which reflects certain anisotropy properties of the involution.
It is shown that this quadratic form can be used to classify totally decomposable algebras with orthogonal involution.
Also, using this form, a criterion is obtained for an orthogonal involution on a split algebra to be conjugated to the transpose involution.
\\
\noindent
\emph{Mathematics Subject Classification:} 16W10, 16K20, 11E39, 11E04. \\
\end{abstract}

\section{Introduction}
The theory of algebras with involution is closely related to the theory of bilinear forms.
Assigning to every symmetric or anti-symmetric regular bilinear form on a finite-dimensional vector space $V$ its adjoint involution induces a one-to-one correspondence between the similarity classes of bilinear forms on $V$ and involutions of the first kind on the endomorphism algebra $\End_F(V)$ (see \cite[p. 1]{knus}).
Under this correspondence several basic properties of involutions reflect analogous properties of bilinear forms.
For example a symmetric or anti-symmetric bilinear form is isotropic (resp. anisotropic, hyperbolic) if and only if its adjoint involution is isotropic (resp. anisotropic, hyperbolic).

Let $(A,\sigma)$ be a totally decomposable algebra with orthogonal involution over a field $F$ of characteristic two.
In \cite{dolphin3}, a bilinear Pfister form $\mathfrak{Pf}(A,\sigma)$ was associated to $(A,\sigma)$, which determines the isotropy behaviour of $(A,\sigma)$.
It was shown that for every splitting field $K$ of $A$, the involution $\sigma_K$ on $A_K$ becomes adjoint to the bilinear form $\mathfrak{Pf}(A,\sigma)_K$ (see \cite[(7.5)]{dolphin3}).
Also, according to \cite[(6.5)]{mn1} this invariant can be used to determine the conjugacy class of the involution $\sigma$ on $A$.

In this work we associate to every algebra with orthogonal involution $(A,\sigma)$ in characteristic two a pair $(S(A,\sigma),q_\sigma)$ where $S(A,\sigma)$ is a subalgebra of $A$ and $q_\sigma$ is a totally singular quadratic form on $S(A,\sigma)$.
Using this pair, we find in \cref{direct} some criteria for $\sigma$ to be {\it direct} (a stronger notion than anisotropy defined in \cite{dolphin2}).
As a consequence, several sufficient conditions for anisotropy of $\sigma$ are obtained in \cref{cor}.
Further, it is shown in \cref{tran} that $q_\sigma$ can classify the transpose involution on a split algebra.
For the case where $(A,\sigma)$ is totally decomposable, using the quadratic space $(S(A,\sigma),q_\sigma)$ we will complement some results of \cite{dolphin3} and \cite{mn1} in \cref{isotropic} and state several necessary and sufficient conditions for $\sigma$ to be anisotropic.
Finally, we shall see in \cref{main1} and \cref{main2} that the form $q_\sigma$ can be used to find a classification of totally decomposable algebras with orthogonal involution in characteristic two.

\section{Preliminaries}
In this paper, $F$ denote a field of characteristic two.

Let $V$ be a vector space of finite dimension over $F$.
A bilinear form $\mathfrak{b}:V\times V\rightarrow F$ is called {\it isotropic} if $\mathfrak{b}(v,v)=0$ for some $0\neq v\in V$.
Otherwise, $\mathfrak{b}$ is called {\it anisotropic}.
We say that $\mathfrak{b}$ {\it represents} $\alpha\in F$ if $\mathfrak{b}(v,v)=\alpha$ for some nonzero vector $v\in V$.
The set of all elements represented by $\mathfrak{b}$ is denoted by $D(\mathfrak{b})$.
If $K/F$ is a field extension, we denote by $\mathfrak{b}_K$ the {\it scalar extension} of $\mathfrak{b}$ to $K$.
For $\alpha_1,\cdots,\alpha_n\in F$ the diagonal bilinear form $\sum_{i=1}^n\alpha_ix_iy_i$  is denoted by $\langle\alpha_1,\cdots,\alpha_n\rangle$.
Also, the form $\bigotimes_{i=1}^n\langle1,\alpha_i\rangle$ is called a {\it bilinear Pfister form} and is denoted by $\lla\alpha_1,\cdots,\alpha_n\rra$.
If $\mathfrak{b}$ is a bilinear Pfister form over $F$, there exists a bilinear form $\mathfrak{b}'$, called the {\it pure subform} of $\mathfrak{b}$, such that $\mathfrak{b}\simeq\langle1\rangle\perp\mathfrak{b}'$.
As observed in \cite[p. 16]{arason} the pure subform of $\mathfrak{b}$ is uniquely determined, up to isometry.
According to \cite[(6.5)]{elman}, the form $\mathfrak{b}$ is anisotropic if and only if $\dim_{F^2}Q(\mathfrak{b})=2^n$, where $Q(\mathfrak{b})=D(\mathfrak{b})\cup\{0\}$.

A {\it quadratic form} on $V$ is a map $q:V\rightarrow F$ such that $(i)\ q(\alpha v)=\alpha^2 q(v)$ for $\alpha\in F$ and $v\in V$; $(ii)$ the map $\mathfrak{b}_q:V\times V\rightarrow F$ given by $\mathfrak{b}(v,w)=q(v+w)-q(v)-q(w)$ is an $F$-bilinear form.
A quadratic form $q$ is called {\it isotropic} if $q(v)=0$ for some $0\neq v\in V$ and {\it anisotropic} otherwise.
For a quadratic space $(V,q)$ over $F$ we use the notation $D(q)=\{q(v)\mid0\neq v\in V\}$ and $Q(q)=D(q)\cup\{0\}$.
The scalar extension of $q$ to an extension $K$ of $F$ is denoted by $q_K$.
A quadratic form $q$ is called {\it totally singular} if $\mathfrak{b}_q$ is the zero map.
For $\alpha_1,\cdots,\alpha_n\in F$ the totally singular quadratic form $\sum_{i=1}^n\alpha_ix_i^2$ is denoted by $\langle\alpha_1,\cdots,\alpha_n\rangle_q$.

Let $A$ be a central simple algebra over $F$ and let $\sigma$ be an {\it involution} on $A$, i.e., an anti-automorphism of $A$ of order two.
We say that $\sigma$ is of {\it the first kind} if it restricts to the identity map on $F$.
For a symmetric bilinear space $(V,\mathfrak{b})$ over $F$ we denote by $\sigma_\mathfrak{b}$ the adjoint involution of $\End_F(V)$ with respect to $\mathfrak{b}$ (see \cite[p. 1]{knus}).
An involution $\sigma$ of the first kind on $A$ is called {\it symplectic} if it becomes adjoint to an {\it alternating} bilinear form over a splitting field of $A$ (i.e., a bilinear form $\mathfrak{b}$ with $D(\mathfrak{b})=0$).
Otherwise, $\sigma$ is called {\it orthogonal}.
According to \cite[(2.6)]{knus}, $\sigma$ is orthogonal if and only if $1\notin\alt(A,\sigma)$, where
$\alt(A,\sigma)=\{x-\sigma(x)\mid x\in A\}$.
The discriminant of an orthogonal involution $\sigma$ is denoted by $\disc\sigma$ (see \cite[(7.1)]{knus}).
An involution $\sigma$ is called {\it isotropic} if $\sigma(x)x=0$ for some nonzero element $x\in A$.
Otherwise, $\sigma$ is called {\it anisotropic}.

We will frequently use the following result.
Recall that if $u$ is a unit in a central simple algebra $A$, the {\it inner automorphism} of $A$ induced by $u$ is defined as $\intt(u)(x)=uxu^{-1}$ for $x\in A$.
\begin{prop}\label{21}
Let $\alpha_1,\cdots,\alpha_n\in F^\times$ and let $u\in M_n(F)$ be the diagonal matrix $\diag(\alpha_1,\cdots,\alpha_n)$.
Consider the involution $\sigma=\intt(u)\circ t$ on $M_n(F)$, where $t$ is the transpose involution.
If $(V,\mathfrak{b})$ is the diagonal bilinear space $\langle\alpha_1,\cdots,\alpha_n\rangle$, then $(M_n(F),\sigma)\simeq(\End_F(V),\sigma_\mathfrak{b})$.
\end{prop}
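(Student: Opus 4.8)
The plan is to realize $(M_n(F),\sigma)$ directly as the endomorphism algebra of $V=F^n$ equipped with the bilinear form having Gram matrix $u$, and then check by hand that $\sigma$ corresponds to the adjoint involution of that form.

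Let $(e_1,\dots,e_n)$ be the standard basis of $V=F^n$ and let $\theta\colon M_n(F)\iso\End_F(V)$ be the canonical algebra isomorphism attached to it (note that $\sigma$ is indeed an involution, since the diagonal matrix $u$ satisfies $u=u^t$). With respect to $(e_i)$ the diagonal bilinear form $\mathfrak b=\la\alpha_1,\dots,\alpha_n\ra$ satisfies $\mathfrak b(e_i,e_j)=\alpha_i\delta_{ij}$, so its Gram matrix is precisely $u$. By the defining property of the adjoint involution, $\sigma_\mathfrak b(\theta(M))$ is the unique endomorphism $g$ of $V$ with $\mathfrak b(\theta(M)x,y)=\mathfrak b(x,g(y))$ for all $x,y\in V$; hence it is enough to verify that $\theta$ intertwines $\sigma$ and $\sigma_\mathfrak b$, i.e. that
\[
\mathfrak b\big(\theta(M)e_i,\,e_j\big)=\mathfrak b\big(e_i,\,\theta(\sigma(M))e_j\big)\qquad\text{for all }i,j\text{ and all }M\in M_n(F),
\]
which suffices by bilinearity. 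Expanding both sides and using $\sigma(M)=uM^tu^{-1}$ together with the diagonality of $u$, this collapses to an essentially trivial identity between the entries of $M$.

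The only delicate point is the familiar transpose/inverse bookkeeping: with one standard choice of $\theta$ the computation yields the adjoint of the form with Gram matrix $u$, and with the other it yields the adjoint of the form with Gram matrix $u^{-1}=\diag(\alpha_1^{-1},\dots,\alpha_n^{-1})$. This is harmless, because $\la\alpha\ra\simeq\la\alpha^{-1}\ra$ over any field (as $\alpha^{-1}=\alpha(\alpha^{-1})^2$), so $\la\alpha_1^{-1},\dots,\alpha_n^{-1}\ra\simeq\la\alpha_1,\dots,\alpha_n\ra$, and isometric bilinear spaces have isomorphic adjoint pairs $(\End_F(V),\sigma_\mathfrak b)$; equivalently, $\intt(u)$ conjugates the adjoint involution of $\la\alpha_1^{-1},\dots,\alpha_n^{-1}\ra$ to $\sigma$, and conjugation by an inner automorphism is an isomorphism of algebras with involution. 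I do not anticipate any real obstacle here: the entire content is the dictionary between matrices and adjoint involutions, and the only thing one must get right is where the transposes go.
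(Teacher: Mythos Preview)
Your plan is correct and is exactly the standard argument; the paper does not prove this statement but simply refers to \cite[pp.~13--14]{knus}, where precisely this dictionary between Gram matrices and adjoint involutions is set up. Your handling of the $u$ versus $u^{-1}$ bookkeeping (via $\la\alpha\ra\simeq\la\alpha^{-1}\ra$, or equivalently via conjugation by $\intt(u)$) is the right fix and matches the conventions used there.
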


\begin{proof}
See \cite[pp. 13-14]{knus}.
\end{proof}

\section{The alternator form}
For a central simple algebra with orthogonal involution $(A,\sigma)$ over $F$ we use the following notation:
\[S(A,\sigma)=\{x\in A\mid \sigma(x)x\in F\oplus\alt(A,\sigma)\}.\]
In other words, $x\in S(A,\sigma)$ if and only if there exists a unique element $\alpha\in F$ such that $\sigma(x)x+\alpha\in\alt(A,\sigma)$.
We denote the element $\alpha$ by $q_\sigma(x)$.
Hence, $q_\sigma:S(A,\sigma)\rightarrow F$ is a map satisfying
\[\sigma(x)x+q_\sigma(x)\in\alt(A,\sigma)\quad {\rm for} \ x\in S(A,\sigma).\]
\begin{lem}\label{alt}
Let $(A,\sigma)$ be a central simple algebra with involution over $F$.
If $x\in\alt(A,\sigma)$, then $\sigma(y)xy\in\alt(A,\sigma)$ for every $y\in A$.
\end{lem}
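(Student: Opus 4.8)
The statement to prove is: if $x \in \alt(A,\sigma)$, then $\sigma(y)xy \in \alt(A,\sigma)$ for every $y \in A$. Since $x \in \alt(A,\sigma)$, write $x = a - \sigma(a)$ for some $a \in A$. The natural guess is that $\sigma(y)xy$ is the alternator of $\sigma(y)ay$, so I would first compute $\sigma(y)ay - \sigma(\sigma(y)ay)$ and check it equals $\sigma(y)xy$. Using that $\sigma$ is an anti-automorphism of order two, $\sigma(\sigma(y)ay) = \sigma(y)\sigma(a)\sigma(\sigma(y)) = \sigma(y)\sigma(a)y$, so indeed $\sigma(y)ay - \sigma(\sigma(y)ay) = \sigma(y)\big(a - \sigma(a)\big)y = \sigma(y)xy$. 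This shows $\sigma(y)xy = z - \sigma(z)$ with $z = \sigma(y)ay \in A$, hence $\sigma(y)xy \in \alt(A,\sigma)$.

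The key steps, in order, are: (1) fix the representation $x = a - \sigma(a)$; (2) set $z := \sigma(y)ay$ and compute $\sigma(z)$ using the anti-automorphism property and $\sigma^2 = \id$; (3) subtract to conclude $z - \sigma(z) = \sigma(y)xy$; (4) invoke the definition of $\alt(A,\sigma)$. None of these steps presents a genuine obstacle — the argument is a one-line verification. The only point requiring the slightest care is the bookkeeping in step (2), namely remembering that $\sigma$ reverses the order of products and that $\sigma(\sigma(y)) = y$; once that is handled correctly the middle factor collapses to $a - \sigma(a) = x$ automatically.

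I expect the proof in the paper to be essentially this computation, possibly stated without even naming the intermediate element $z$. If one wanted to be slightly more streamlined, one could avoid choosing a preimage $a$ altogether by working directly: the subspace $\alt(A,\sigma)$ is the image of the $F$-linear map $a \mapsto a - \sigma(a)$, and the map $a \mapsto \sigma(y)ay$ intertwines this operator with itself (in the sense just computed), so it carries the image into the image. Either phrasing is routine; the substantive content is simply the identity $\sigma(y)\,a\,y - \sigma\big(\sigma(y)\,a\,y\big) = \sigma(y)\,\big(a - \sigma(a)\big)\,y$.
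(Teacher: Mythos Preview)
Your proof is correct and matches the paper's own argument almost verbatim: the paper writes $x = z - \sigma(z)$ and then observes $\sigma(y)xy = \sigma(y)zy - \sigma(\sigma(y)zy) \in \alt(A,\sigma)$, which is exactly your computation with $a$ in place of $z$.
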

\begin{proof}
Write $x=z-\sigma(z)$ for some $z\in A$.
Then
\[\sigma(y)xy=\sigma(y)(z-\sigma(z))y=\sigma(y)zy-\sigma(\sigma(y)zy)\in\alt(A,\sigma).\qedhere\]
\end{proof}

\begin{lem}\label{algebra}
Let $(A,\sigma)$ be a central simple algebra with orthogonal involution over $F$.
Then
\begin{itemize}
  \item[$(i)$] $S(A,\sigma)$ is a (unitary) $F$-subalgebra of $A$.
  \item[$(ii)$] $q_\sigma(\lambda x+y)=\lambda^2q_\sigma(x)+q_\sigma(y)$ and $q_\sigma(xy)=q_\sigma(x)q_\sigma(y)$ for every $\lambda\in F$ and $x,y\in S(A,\sigma)$.
\end{itemize}
\end{lem}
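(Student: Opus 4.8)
The plan is to verify the two parts essentially by algebraic manipulation inside $A$, leaning on \cref{alt} to absorb alternator-valued correction terms. Throughout, write $N=\alt(A,\sigma)$, so $N$ is an $F$-subspace of $A$, closed under the operation $y\mapsto\sigma(y)xy$ by \cref{alt}, and $F\cap N=0$ since $\sigma$ is orthogonal; this last fact is what guarantees that $q_\sigma(x)$ is well defined (unique) for $x\in S(A,\sigma)$.

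For part $(ii)$, I would start with the additive-quadratic identity. Given $x,y\in S(A,\sigma)$ and $\lambda\in F$, expand
\[
\sigma(\lambda x+y)(\lambda x+y)=\lambda^2\sigma(x)x+\sigma(y)y+\lambda\bigl(\sigma(x)y+\sigma(y)x\bigr).
\]
The first two summands, after subtracting $\lambda^2 q_\sigma(x)+q_\sigma(y)$, lie in $N$ by definition of $q_\sigma$; and the cross term $\sigma(x)y+\sigma(y)x$ equals $\sigma(x)y+\sigma(\sigma(x)y)$, hence lies in $N$ as well. Since $F\cap N=0$, this forces $\lambda x+y\in S(A,\sigma)$ with $q_\sigma(\lambda x+y)=\lambda^2 q_\sigma(x)+q_\sigma(y)$; taking $\lambda=1$ already shows $S(A,\sigma)$ is closed under addition. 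For the multiplicativity, write $\sigma(x)x=q_\sigma(x)+n_1$ and $\sigma(y)y=q_\sigma(y)+n_2$ with $n_1,n_2\in N$, and compute
\[
\sigma(xy)(xy)=\sigma(y)\sigma(x)xy=\sigma(y)\bigl(q_\sigma(x)+n_1\bigr)y=q_\sigma(x)\,\sigma(y)y+\sigma(y)n_1y.
\]
By \cref{alt} the term $\sigma(y)n_1y$ lies in $N$, and $q_\sigma(x)\sigma(y)y=q_\sigma(x)q_\sigma(y)+q_\sigma(x)n_2$ with $q_\sigma(x)n_2\in N$; so $\sigma(xy)(xy)-q_\sigma(x)q_\sigma(y)\in N$, giving $xy\in S(A,\sigma)$ and $q_\sigma(xy)=q_\sigma(x)q_\sigma(y)$.

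For part $(i)$, the computations just performed show $S(A,\sigma)$ is closed under $F$-scalar multiples, addition and products; it remains only to note $1\in S(A,\sigma)$ with $q_\sigma(1)=1$ (since $\sigma(1)\cdot 1=1\in F$), so $S(A,\sigma)$ is a unital $F$-subalgebra. I do not expect a serious obstacle here: the only point requiring care is the repeated invocation of $F\cap\alt(A,\sigma)=0$ to pin down the scalar $q_\sigma$ uniquely and to transfer membership in $F\oplus\alt(A,\sigma)$ to the exact value of $q_\sigma$ — this is precisely the orthogonality hypothesis (via \cite[(2.6)]{knus}), and every step above is arranged so that the discrepancy between the two sides lands in $\alt(A,\sigma)$ before one reads off the scalar.
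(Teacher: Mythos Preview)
Your proof is correct and follows essentially the same approach as the paper's own argument: the paper expands $\sigma(\lambda x+y)(\lambda x+y)+\lambda^2 q_\sigma(x)+q_\sigma(y)$ exactly as you do (writing the cross term as $\sigma(\lambda x)y-\sigma(\sigma(\lambda x)y)$), and handles the product by the same telescoping computation using \cref{alt}. The only cosmetic difference is that the paper begins by observing $F\subseteq S(A,\sigma)$ via $\sigma(\lambda)\lambda+\lambda^2=0$, whereas you note $1\in S(A,\sigma)$ at the end; both serve the same purpose.
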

\begin{proof}
For every $\lambda\in F$ we have $\sigma(\lambda)\lambda+\lambda^2=0\in\alt(A,\sigma)$, so $F\subseteq S(A,\sigma)$.
Let $x,y\in S(A,\sigma)$ and $\lambda\in F$.
Set $\alpha=q_\sigma(x)\in F$ and $\beta=q_\sigma(y)\in F$.
Then
\[\sigma(\lambda x+y)(\lambda x+y)+\lambda^2\alpha+\beta=\lambda^2(\sigma(x)x+\alpha)+(\sigma(y)y+\beta)+\sigma(\lambda x)y-\sigma(\sigma(\lambda x)y).\]
Hence, $\sigma(\lambda x+y)(\lambda x+y)+\lambda^2\alpha+\beta\in\alt(A,\sigma)$, which implies that $\lambda x+y\in S(A,\sigma)$ and $q_\sigma(\lambda x+y)=\lambda^2\alpha+\beta=\lambda^2q_\sigma(x)+q_\sigma(y)$.
Similarly, using \cref{alt} we have
\begin{align*}
  \sigma(xy)(xy)+\alpha\beta&=\sigma(y)\sigma(x)xy+\alpha\beta=\sigma(y)(\sigma(x)x+\alpha)y+\alpha\sigma(y)y+\alpha\beta\\
  &=\sigma(y)(\sigma(x)x+\alpha)y+\alpha(\sigma(y)y+\beta)\in\alt(A,\sigma).
\end{align*}
Hence, $xy\in S(A,\sigma)$ and $q_\sigma(xy)=\alpha\beta=q_\sigma(x)q_\sigma(y)$, proving the result.
\end{proof}

\begin{cor}
Let $(A,\sigma)$ be a central simple algebra with orthogonal involution over $F$.
Then $q_\sigma$ is a totally singular quadratic form on $S(A,\sigma)$.
\end{cor}

\begin{proof}
The result follows from \cref{algebra} $(ii)$.
\end{proof}

\begin{defi}
Let $(A,\sigma)$ be a central simple algebras with ortho\-gonal involution over $F$.
We call $S(A,\sigma)$ the {\it alternator subalgebra} of $(A,\sigma)$.
We also call the quadratic form $q_\sigma$ the {\it alternator form} of $(A,\sigma)$.
\end{defi}

\begin{lem}\label{iso}
Let $(A,\sigma)$ and $(A',\sigma')$ be two central simple algebras with ortho\-gonal involution over $F$.
If $f:(A,\sigma)\iso(A',\sigma')$ is an isomorphism of algebras with involution, the restriction of $f$ to $S(A,\sigma)$ defines an isometry $(S(A,\sigma),q_\sigma)\penalty 0\iso (S(A',\sigma'),q_{\sigma'})$.
\end{lem}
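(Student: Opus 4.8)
The plan is to show that $f$ carries the defining data of $(S(A,\sigma),q_\sigma)$ exactly onto that of $(S(A',\sigma'),q_{\sigma'})$, so that the asserted isometry is simply the restriction of $f$. The only structural inputs are that $f$ is an isomorphism of $F$-algebras and that it intertwines the involutions, i.e. $f\circ\sigma=\sigma'\circ f$.

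First I would check that $f$ maps $F\oplus\alt(A,\sigma)$ onto $F\oplus\alt(A',\sigma')$. Since $f$ is an $F$-algebra isomorphism it fixes $F$ pointwise, and for any $z\in A$ one has $f(z-\sigma(z))=f(z)-\sigma'(f(z))\in\alt(A',\sigma')$; as $f$ is bijective this yields $f(\alt(A,\sigma))=\alt(A',\sigma')$, whence $f(F\oplus\alt(A,\sigma))=F\oplus\alt(A',\sigma')$. Next, for $x\in A$ I compute $f(\sigma(x)x)=f(\sigma(x))f(x)=\sigma'(f(x))f(x)$. Combining this with the previous observation, $\sigma(x)x\in F\oplus\alt(A,\sigma)$ if and only if $\sigma'(f(x))f(x)\in F\oplus\alt(A',\sigma')$, i.e. $x\in S(A,\sigma)$ if and only if $f(x)\in S(A',\sigma')$. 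Since $f$ is bijective, $f(S(A,\sigma))=S(A',\sigma')$, and the restriction $f|_{S(A,\sigma)}$ is an $F$-linear (indeed $F$-algebra) isomorphism onto $S(A',\sigma')$.

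Finally, for the quadratic forms: if $x\in S(A,\sigma)$ and $\alpha=q_\sigma(x)$, then $\sigma(x)x+\alpha\in\alt(A,\sigma)$; applying $f$ and using $f(\alpha)=\alpha$ gives $\sigma'(f(x))f(x)+\alpha\in\alt(A',\sigma')$, and by the uniqueness of the scalar in the definition of $q_{\sigma'}$ we conclude $q_{\sigma'}(f(x))=\alpha=q_\sigma(x)$. Hence $f|_{S(A,\sigma)}$ preserves the forms, and being a linear bijection it is an isometry.

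I expect no real obstacle here; the argument is a direct unwinding of definitions. The only point deserving a moment's care is the well-definedness invoked in the last step — that the scalar $\alpha$ with $\sigma'(y)y+\alpha\in\alt(A',\sigma')$ is unique — which is exactly the fact, already recorded after the definition of $S$, that orthogonality of $\sigma'$ forces $F\cap\alt(A',\sigma')=0$; this guarantees that $q_{\sigma'}(f(x))$ is genuinely determined and equal to $\alpha$.
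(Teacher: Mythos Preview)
Your proof is correct and follows essentially the same approach as the paper's own proof: apply $f$ to the relation $\sigma(x)x+q_\sigma(x)\in\alt(A,\sigma)$ and use that $f$ intertwines the involutions and preserves alternating elements. Your version is slightly more detailed (you explicitly verify surjectivity onto $S(A',\sigma')$ via bijectivity of $f$, whereas the paper leaves this implicit), but the substance is identical.
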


\begin{proof}
For every $x\in S(A,\sigma)$ we have $\sigma(x)x+q_\sigma(x)\in\alt(A,\sigma)$, which implies that $f(\sigma(x)x+q_\sigma(x))\in\alt(A',\sigma')$.
Hence, $\sigma'(f(x))f(x)+q_\sigma(x)\in\alt(A',\sigma')$, i.e., $f(x)\in S(A',\sigma')$ and $q_{\sigma'}(f(x))=q_\sigma(x)$.
\end{proof}

The following definition was given in \cite{dolphin2}.
\begin{defi}
An involution $\sigma$ on a central simple algebra $A$ is called {\it direct} if for every $x\in A$ the condition $\sigma(x)x\in\alt(A,\sigma)$ implies that $x=0$.
\end{defi}
\begin{thm}\label{direct}
For an orthogonal involution $\sigma$ on a central simple $F$-algebra $A$ the following conditions are equivalent:
\begin{itemize}
  \item[$(1)$] $\sigma$ is direct.
  \item[$(2)$] $q_\sigma$ is anisotropic.
  \item[$(3)$] $S(A,\sigma)$ is a field.
  \end{itemize}
Moreover, if these conditions hold, then $x^2\in F$ for all $x\in S(A,\sigma)$.
\end{thm}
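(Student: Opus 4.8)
The plan is to run the implications $(1)\Leftrightarrow(2)$, $(2)\Rightarrow(3)$, $(3)\Rightarrow(2)$, and then to read the final assertion off anisotropy of $q_\sigma$. The only inputs needed beyond the definitions are the algebraic identities for $q_\sigma$ recorded in \cref{algebra} and the fact that, since $\sigma$ is orthogonal, $F\cap\alt(A,\sigma)=0$ (which is what makes $q_\sigma(x)$ well defined in the first place).

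The equivalence $(1)\Leftrightarrow(2)$ is purely formal. For $x\in A$ I would check that $\sigma(x)x\in\alt(A,\sigma)$ if and only if $x\in S(A,\sigma)$ and $q_\sigma(x)=0$: if $\sigma(x)x\in\alt(A,\sigma)$ then a fortiori $\sigma(x)x\in F\oplus\alt(A,\sigma)$, so $x\in S(A,\sigma)$, and uniqueness of the scalar forces $q_\sigma(x)=0$; the converse is the defining property of $q_\sigma$. Hence the condition defining directness---that $\sigma(x)x\in\alt(A,\sigma)$ implies $x=0$---says exactly that $q_\sigma(x)=0$ forces $x=0$ on $S(A,\sigma)$, i.e. that $q_\sigma$ is anisotropic.

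For $(2)\Rightarrow(3)$ the key remark is that \cref{algebra}$(ii)$, together with $q_\sigma(1)=1$ (which holds because $1+1=0\in\alt(A,\sigma)$ and $F\cap\alt(A,\sigma)=0$), says precisely that $q_\sigma\colon S(A,\sigma)\to F$ is a unital ring homomorphism: it is additive since $q_\sigma(x+y)=q_\sigma(1\cdot x+y)=q_\sigma(x)+q_\sigma(y)$, and multiplicative by the same lemma. Anisotropy of $q_\sigma$ means this homomorphism has trivial kernel, hence is injective; since its target is the commutative field $F$, $S(A,\sigma)$ is a commutative integral domain. Being finite-dimensional over $F$ as an $F$-subspace of the central simple algebra $A$, it is then a field, because multiplication by any nonzero element is an injective, hence bijective, $F$-linear endomorphism. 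The converse $(3)\Rightarrow(2)$ is immediate: if $S(A,\sigma)$ is a field and $0\neq x\in S(A,\sigma)$ satisfies $q_\sigma(x)=0$, then $x^{-1}\in S(A,\sigma)$ and multiplicativity gives the contradiction $1=q_\sigma(1)=q_\sigma(x)\,q_\sigma(x^{-1})=0$.

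Finally, assuming the equivalent conditions hold (so $q_\sigma$ is anisotropic), I would deduce $x^2\in F$ for $x\in S(A,\sigma)$ by evaluating $q_\sigma$ on $y:=x^2+q_\sigma(x)\cdot1\in S(A,\sigma)$: using $q_\sigma(x^2)=q_\sigma(x)^2$, $q_\sigma(q_\sigma(x)\cdot1)=q_\sigma(x)^2$, additivity of $q_\sigma$, and $\car F=2$, one gets $q_\sigma(y)=q_\sigma(x)^2+q_\sigma(x)^2=0$, so anisotropy forces $y=0$, that is $x^2=q_\sigma(x)\cdot1\in F$. I do not expect a real obstacle here: the substantive content is already in \cref{algebra} and in the orthogonality of $\sigma$, after which the three equivalences and the last identity are short formal consequences---the least obvious point being that commutativity of $S(A,\sigma)$ is gotten for free by treating $q_\sigma$ as a ring homomorphism into $F$, rather than by a direct manipulation of $\alt(A,\sigma)$.
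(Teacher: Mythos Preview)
Your argument is correct. The route differs from the paper's in one notable respect: for $(2)\Rightarrow(3)$ you package everything into the single observation that $q_\sigma\colon S(A,\sigma)\to F$ is an injective unital ring homomorphism, whence $S(A,\sigma)$ is a commutative integral domain and thus, being a finite-dimensional $F$-algebra, a field. The paper instead proves the three field axioms (no zero divisors, existence of inverses, commutativity) by separate direct computations, each time invoking \cref{alt} to push elements into $\alt(A,\sigma)$; similarly the paper closes the cycle via $(3)\Rightarrow(1)$ using \cref{alt} again, whereas your $(3)\Rightarrow(2)$ is a one-line consequence of multiplicativity. Your approach is more conceptual and avoids \cref{alt} altogether; the paper's has the minor advantage that it never appeals to finite-dimensionality (inverses are exhibited explicitly), though that is hardly an issue in the present setting. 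The treatment of the final assertion $x^2=q_\sigma(x)\in F$ is the same in both.
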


\begin{proof}
The implication $(1)\Rightarrow (2)$ is evident.

$(2)\Rightarrow(3):$
Since $S(A,\sigma)$ is a subalgebra of $A$, it suffices to show that (i) $S(A,\sigma)$ contains no zero devisor;
(ii) $x^{-1}\in S(A,\sigma)$ for every nonzero element $x\in S(A,\sigma)$; and
(iii) $S(A,\sigma)$ is commutative.

Suppose that $xy=0$ for some $0\neq x\in S(A,\sigma)$ and $y\in A$.
Set $\alpha=q_\sigma(x)$, so that $\sigma(x)x+\alpha\in\alt(A,\sigma)$.
By \cref{alt} we have
\begin{equation*}
\alpha\sigma(y)y=0+\alpha\sigma(y)y=\sigma(y)\sigma(x)xy+\alpha\sigma(y)y=\sigma(y)(\sigma(x)x+\alpha)y\in\alt(A,\sigma).
\end{equation*}
Since $q_\sigma$ is anisotropic we have $\alpha\neq0$.
Hence, $y\in S(A,\sigma)$ and $q_\sigma(y)=0$.
Again, the anisotropy of $q_\sigma$ implies that $y=0$, proving (i).

To prove (ii) let $0\neq x\in S(A,\sigma)$ and $\alpha=q_\sigma(x)\neq 0$, so that $\sigma(x)x+\alpha\in\alt(A,\sigma)$.
By \cref{alt} we have $\sigma(x^{-1})(\sigma(x)x+\alpha)x^{-1}\in\alt(A,\sigma)$, i.e., $\alpha\sigma(x^{-1})(x^{-1})+1\in\alt(A,\sigma)$.
It follows that $\sigma(x^{-1})(x^{-1})+\alpha^{-1}\in\alt(A,\sigma)$, so $x^{-1}\in S(A,\sigma)$.

Finally, if $x,y\in S(A,\sigma)$, then \cref{algebra} implies that
\[q_\sigma(xy+yx)=q_\sigma(xy)+q_\sigma(yx)=q_\sigma(x)q_\sigma(y)+q_\sigma(y)q_\sigma(x)=0.\]
Since $q_\sigma$ is anisotropic we get $xy=yx$, proving (iii).

$(3)\Rightarrow(1):$ Suppose that $\sigma(x)x\in\alt(A,\sigma)$ for some $x\in A$.
Then $x\in S(A,\sigma)$ and $q_\sigma(x)=0$.
If $x\neq0$ then $x$ is a unit, because $S(A,\sigma)$ is a field.
Since $\sigma(x)x\in\alt(A,\sigma)$, \cref{alt} implies that $1=\sigma(x^{-1})\sigma(x)xx^{-1}\in\alt(A,\sigma)$.
This contradicts the orthogonality of $\sigma$.

To prove the last statement of the result, let $x\in S(A,\sigma)$ and set $\alpha=q_\sigma(x)$.
By \Cref{algebra} we have $q_\sigma(x^2)=\alpha^2=q_\sigma(\alpha)$, so $q_\sigma(x^2+\alpha)=0$.
As $q_\sigma$ is anisotropic, we get $x^2=\alpha\in F$.
\end{proof}

\begin{lem}{\rm(\cite[(2.26)]{knus})}\label{gen}
Let $(A,\sigma)$ be a central simple algebra with orthogonal involution over $F$.
Then the set $\sym(A,\sigma)$ generates $A$ as an (associative) $F$-algebra.
\end{lem}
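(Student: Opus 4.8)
The plan is to reduce to the split case by scalar extension and then to run a short computation with matrix units; orthogonality of $\sigma$ will play no role, so I would prove the statement for an arbitrary involution of the first kind.

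Let $B\subseteq A$ denote the $F$-subalgebra generated by $\sym(A,\sigma)$; the goal is to show $B=A$. First I would record that both the formation of $\sym(A,\sigma)$ and the formation of the subalgebra generated by a given set are compatible with scalar extension: since $\sym(A,\sigma)$ is the $F$-subspace cut out by the linear condition $\sigma(x)=x$, one has $\sym(A_K,\sigma_K)=\sym(A,\sigma)\otimes_F K$ for every field extension $K/F$, and the $K$-subalgebra of $A_K$ generated by this set is $B\otimes_F K$, because a product of $K$-linear combinations of symmetric elements is a $K$-linear combination of products of symmetric elements. Consequently, if $B\otimes_F K=A_K$ for a single extension $K$, then comparing dimensions over $K$ forces $\dim_F B=\dim_F A$, hence $B=A$. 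Taking $K$ to be a splitting field of $A$ reduces us to $A=M_n(F)$, and since the case $n=1$ is trivial I may assume $n\ge2$.

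For $A=M_n(F)$, the Skolem--Noether theorem gives $\sigma=\intt(g)\circ t$ for some invertible $g$, where $t$ is the transpose involution; the relation $\sigma^2=\id$ together with $\car F=2$ forces $g^t=g$. A direct check shows that $x\in\sym(A,\sigma)$ if and only if $xg$ is a symmetric matrix, so $\sym(A,\sigma)=\Sym_n(F)\,g^{-1}$, where $\Sym_n(F)$ is the space of symmetric $n\times n$ matrices over $F$. In particular $g=g^2\cdot g^{-1}$ and $g^{-1}=I\cdot g^{-1}$ both lie in $\sym(A,\sigma)\subseteq B$, and for every symmetric matrix $S$ the element $(S\,g^{-1})\cdot g=S$ lies in $B$; therefore $\Sym_n(F)\subseteq B$. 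Since $\Sym_n(F)$ contains every $E_{ii}$ and every $E_{ij}+E_{ji}$ with $i\ne j$, and since $E_{ii}(E_{ij}+E_{ji})=E_{ij}$, all matrix units lie in $B$, whence $B=M_n(F)$.

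I expect the delicate point to be the passage to split algebras rather than any individual computation: one cannot simply invoke \cref{21}, since in characteristic two an orthogonal involution on a split algebra need not be adjoint to a \emph{diagonal} bilinear form (equivalently, a regular symmetric bilinear form need not be diagonalizable), so the argument must be carried out with a general symmetric Gram matrix $g$. The one mildly non-obvious ingredient is then that $g$ itself is $\sigma$-symmetric; this is exactly what makes all of $\Sym_n(F)$, and hence all matrix units, visible inside $B$. Everything else is routine linear algebra.
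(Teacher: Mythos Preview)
Your argument is correct. Both you and the paper reduce to the split case by noting that the subalgebra $B$ generated by $\sym(A,\sigma)$ satisfies $B\otimes_F K=$ (subalgebra generated by $\sym(A_K,\sigma_K)$), so it suffices to treat $A_K=M_n(K)$. From there the two proofs diverge. The paper simply cites \cite[(2.26)]{knus} when $\deg_FA>2$, and in the remaining case $\deg_FA=2$ it extends scalars further to a field $K$ over which $(A,\sigma)_K\simeq(M_2(K),t)$ and checks directly that the symmetric $2\times2$ matrices generate $M_2(K)$. Your argument is instead uniform in $n$ and self-contained: writing $\sigma=\intt(g)\circ t$ with $g^t=g$, you observe that $g$ and $g^{-1}$ are themselves $\sigma$-symmetric, deduce $\Sym_n(F)\subseteq B$, and then recover all matrix units via $E_{ii}(E_{ij}+E_{ji})=E_{ij}$. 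This avoids both the external citation and the second scalar extension, at the price of a short Skolem--Noether computation; it also justifies your remark that orthogonality is irrelevant (in characteristic two an alternating $g$ is still symmetric, so the same argument goes through for symplectic $\sigma$). Your closing caveat about \cref{21} is well taken but not quite the obstacle the paper faces: rather than working with a general Gram matrix, the paper sidesteps non-diagonalizability altogether by enlarging $K$ until the involution becomes the transpose.
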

\begin{proof}
If $\deg_FA>2$, the result follows from \cite[(2.26)]{knus}.
Suppose that $\deg_FA=2$.
Let $B\subseteq A$ be the subalgebra of $A$ generated by $\sym(A,\sigma)$.
Using the idea of the proof of \cite[(2.26)]{knus}, it is enough to show that $B_K=A_K$ for some extension $K$ of $F$.
Choose an extension $K/F$ with $(A,\sigma)_K\simeq(M_2(K),t)$.
The conclusion now easily follows by identifying $\sym(A,\sigma)_K$ with the set of matrices of the form $\left(\begin{smallmatrix}
                                                                           a & b \\
                                                                           b & c
                                                                         \end{smallmatrix}\right)$, where $a,b,c\in K$.
\end{proof}

\begin{prop}\label{sym}
Let $(A,\sigma)$ be a central simple algebra with orthogonal involution over $F$.
If $S(A,\sigma)\subseteq \sym(A,\sigma)$, then $\sigma$ is direct.
\end{prop}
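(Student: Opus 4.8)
The plan is to argue by contradiction: assuming $\sigma$ is not direct, I will use the hypothesis $S(A,\sigma)\subseteq\sym(A,\sigma)$ to produce an invertible element lying in $\alt(A,\sigma)$, which contradicts the orthogonality of $\sigma$ (recall that $1\notin\alt(A,\sigma)$). First, if $\sigma$ is not direct, then by definition there is a nonzero $x\in A$ with $\sigma(x)x\in\alt(A,\sigma)$, so that $x\in S(A,\sigma)$ and $q_\sigma(x)=0$. Since $x\in S(A,\sigma)\subseteq\sym(A,\sigma)$ we have $\sigma(x)=x$, hence $x^2=\sigma(x)x\in\alt(A,\sigma)$.

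The key step is to feed $x^2\in\alt(A,\sigma)$ into \cref{alt}: for every $y\in A$ we get $\sigma(y)x^2y\in\alt(A,\sigma)$, and since $\sigma(y)x^2y=\sigma(y)\sigma(x)xy=\sigma(xy)(xy)$, this says $xy\in S(A,\sigma)$ for every $y$. Therefore $xA\subseteq S(A,\sigma)\subseteq\sym(A,\sigma)$, so $\sigma$ fixes every element of the right ideal $xA$, i.e.\ $\sigma(xA)=xA$. On the other hand $\sigma(xA)=\sigma(A)\sigma(x)=Ax$. Hence $xA=Ax$ is a nonzero two-sided ideal of the simple algebra $A$, which forces $xA=A$; in particular $x$ is a unit.

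To finish, apply \cref{alt} once more to $x^2\in\alt(A,\sigma)$, this time with the element $x^{-1}$, noting that $\sigma(x^{-1})=\sigma(x)^{-1}=x^{-1}$: we obtain $1=x^{-1}x^2x^{-1}=\sigma(x^{-1})x^2x^{-1}\in\alt(A,\sigma)$, contradicting orthogonality. (A second, essentially equivalent, route from $xA\subseteq\sym(A,\sigma)$: it gives $\sigma(a)x=xa$ for all $a\in A$, hence $x$ commutes with every symmetric element, so $x$ is central by \cref{gen}; then $x^2\in F^\times\cap\alt(A,\sigma)$ and dividing by $x^2$ again yields $1\in\alt(A,\sigma)$.)

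I expect the only genuine obstacle to be locating this bridge from the hypothesis to a contradiction — namely the observation that a symmetric element isotropic for $q_\sigma$ satisfies $x^2\in\alt(A,\sigma)$, and that \cref{alt} upgrades this to the inclusion $xA\subseteq S(A,\sigma)$. Once that is in hand, the two-sided-ideal argument and the computation with units are routine.
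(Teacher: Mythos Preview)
Your argument is correct. Both your route and the paper's start identically: from a nonzero $x$ with $\sigma(x)x\in\alt(A,\sigma)$ one gets $\sigma(x)=x$, and then \cref{alt} yields $\sigma(xy)(xy)=\sigma(y)\bigl(\sigma(x)x\bigr)y\in\alt(A,\sigma)$, so $xy\in S(A,\sigma)\subseteq\sym(A,\sigma)$. The paper restricts this to $y\in\sym(A,\sigma)$ and reads off $yx=\sigma(xy)=xy$, so $x$ centralises $\sym(A,\sigma)$; invoking \cref{gen} (symmetric elements generate $A$) then forces $x\in F$, whence $x^2\in F\cap\alt(A,\sigma)$ and $1\in\alt(A,\sigma)$. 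Your main route instead uses the observation for \emph{all} $y\in A$, obtaining $xA\subseteq\sym(A,\sigma)$, hence $xA=\sigma(xA)=Ax$ is a nonzero two-sided ideal, so $x$ is a unit by simplicity; then $1=\sigma(x^{-1})x^2x^{-1}\in\alt(A,\sigma)$. The payoff of your approach is that it bypasses \cref{gen} entirely, trading the generation lemma for the bare simplicity of $A$; your parenthetical ``second route'' is exactly the paper's argument.
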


\begin{proof}
Suppose that $\sigma(x)x\in\alt(A,\sigma)$ for some $x\in A$.
Then $x\in S(A,\sigma)$, which implies that $\sigma(x)=x$.
We claim that $\sym(A,\sigma)\subseteq C_A(x)$, where $C_A(x)$ is the centralizer of $x$ in $A$.
Let $y\in\sym(A,\sigma)$.
By \cref{alt} we have
\[\sigma(xy)xy=\sigma(y)\sigma(x)xy\in\alt(A,\sigma),\]
hence $xy\in S(A,\sigma)\subseteq \sym(A,\sigma)$.
It follows that $yx=\sigma(y)\sigma(x)=\sigma(xy)=xy$, because the elements $x$, $y$ and $xy$ are all symmetric.
This proves the claim.
By \cref{gen}, $\sym(A,\sigma)$ generates $A$ as an $F$-algebra.
Hence, $C_A(x)=A$, i.e., $x\in F$.
Since $\sigma$ is orthogonal, the condition $\sigma(x)x\in\alt(A,\sigma)$ implies that $x=0$, so $\sigma$ is direct.
\end{proof}

Since every direct involution is anisotropic, one can use \cref{direct} and \cref{sym} to find some sufficient conditions for anisotropy of orthogonal involutions:

\begin{cor}\label{cor}
Let $(A,\sigma)$ be a central simple algebra with orthogonal involution over $F$.
If any of these conditions is satisfied, then $\sigma$ is anisotropic:
$(i)$ $q_\sigma$ is anisotropic. $(ii)$ $S(A,\sigma)$ is a field. $(iii)$ $S(A,\sigma)\subseteq \sym(A,\sigma)$.
\end{cor}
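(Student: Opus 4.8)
The plan is to reduce all three conditions to the single notion of directness and then invoke the elementary observation, already announced just above the statement, that every direct involution is anisotropic. So the argument is pure bookkeeping built on \cref{direct} and \cref{sym}.

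First I would record the implication ``direct $\Rightarrow$ anisotropic'' together with its short proof: if $\sigma$ is direct and $\sigma(x)x=0$ for some $x\in A$, then since $0\in\alt(A,\sigma)$ we have $\sigma(x)x\in\alt(A,\sigma)$, and directness forces $x=0$. Hence a direct involution cannot be isotropic. This handles the passage from the conclusions of \cref{direct} and \cref{sym} to the statement we want, once and for all.

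With that in hand the three cases are immediate. For $(i)$ I would quote the implication $(2)\Rightarrow(1)$ of \cref{direct}: anisotropy of $q_\sigma$ gives that $\sigma$ is direct, hence anisotropic by the previous paragraph. For $(ii)$ I would quote $(3)\Rightarrow(1)$ of \cref{direct}: if $S(A,\sigma)$ is a field then $\sigma$ is direct, hence anisotropic. For $(iii)$ I would quote \cref{sym} directly: $S(A,\sigma)\subseteq\sym(A,\sigma)$ implies $\sigma$ is direct, hence anisotropic.

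I do not expect any genuine obstacle here, since every ingredient has already been established; the corollary is a routine consequence of \cref{direct} and \cref{sym}. The only point that really needs to be spelled out is the one-line implication ``direct $\Rightarrow$ anisotropic'' of the second paragraph, and even that is essentially contained in the remark preceding the statement.
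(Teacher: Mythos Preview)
Your proposal is correct and matches the paper's own approach exactly: the paper gives no separate proof of this corollary, merely prefacing it with the sentence ``Since every direct involution is anisotropic, one can use \cref{direct} and \cref{sym} to find some sufficient conditions for anisotropy of orthogonal involutions,'' which is precisely the bookkeeping you spell out.
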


\begin{lem}\label{split}
Let $(V,\mathfrak{b})$ be a symmetric non-alternating bilinear space over $F$.
\begin{itemize}
  \item [$(i)$] If $x\in\End_F(V)$, then $\mathfrak{b}(x(v),x(v))=q_{\sigma_\mathfrak{b}}(x)\mathfrak{b}(v,v)$ for every $v\in V$.
  \item [$(ii)$] If $\mathfrak{b}$ represents $1$, then $D(q_{\sigma_\mathfrak{b}})\subseteq D(\mathfrak{b})$.
\end{itemize}
\end{lem}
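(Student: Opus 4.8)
The plan is to work inside $A=\End_F(V)$ with $\sigma=\sigma_\mathfrak{b}$; since $\mathfrak{b}$ is non-alternating, $\sigma$ is orthogonal, so $S(A,\sigma)$ and $q_\sigma$ are defined, and in $(i)$ the identity only makes sense for those $x$ for which $q_{\sigma_\mathfrak{b}}(x)$ is defined, i.e.\ $x\in S(A,\sigma)$. Everything rests on one elementary fact: for every $a\in\alt(A,\sigma)$ and every $v\in V$,
\[\mathfrak{b}\bigl(v,a(v)\bigr)=0.\]
I would prove this by writing $a=z-\sigma(z)$ with $z\in A$ and invoking the defining property $\mathfrak{b}(z(v),w)=\mathfrak{b}(v,\sigma(z)(w))$ of the adjoint, taken with $w=v$, together with the symmetry of $\mathfrak{b}$: this yields $\mathfrak{b}(v,\sigma(z)(v))=\mathfrak{b}(z(v),v)=\mathfrak{b}(v,z(v))$, whence $\mathfrak{b}(v,a(v))=\mathfrak{b}(v,z(v))-\mathfrak{b}(v,z(v))=0$.

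For part $(i)$, I would take $x\in S(A,\sigma)$ and use the definition of $q_\sigma$ to write $\sigma(x)x=q_\sigma(x)\cdot\id_V+a$ with $a\in\alt(A,\sigma)$. Applying the adjoint relation once more, $\mathfrak{b}(x(v),x(v))=\mathfrak{b}\bigl(v,(\sigma(x)x)(v)\bigr)=q_\sigma(x)\,\mathfrak{b}(v,v)+\mathfrak{b}(v,a(v))$, and the last term vanishes by the fact above; this is exactly the asserted identity.

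For part $(ii)$, I would fix $e\in V$ with $\mathfrak{b}(e,e)=1$ (available because $\mathfrak{b}$ represents $1$) and take $\alpha\in D(q_{\sigma_\mathfrak{b}})$, say $\alpha=q_\sigma(x)$ with $0\neq x\in S(A,\sigma)$. Part $(i)$ gives $\mathfrak{b}(x(e),x(e))=\alpha\,\mathfrak{b}(e,e)=\alpha$. If $x(e)\neq0$, this already exhibits $\alpha\in D(\mathfrak{b})$. If $x(e)=0$, then $\alpha=0$; since $x\neq0$ there is $u\in V$ with $x(u)\neq0$, and part $(i)$ gives $\mathfrak{b}(x(u),x(u))=\alpha\,\mathfrak{b}(u,u)=0$, so again $\alpha=0\in D(\mathfrak{b})$. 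In either case $\alpha\in D(\mathfrak{b})$, giving $D(q_{\sigma_\mathfrak{b}})\subseteq D(\mathfrak{b})$.

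The computations are routine unwindings of the adjoint relation and the symmetry of $\mathfrak{b}$; the only point that needs a second's thought — the \emph{main obstacle}, slight as it is — is the degenerate branch $x(e)=0$ in $(ii)$, where no isotropic vector of $\mathfrak{b}$ is visible at $e$ itself and one must instead produce one inside the nonzero image of $x$.
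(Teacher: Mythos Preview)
Your argument is correct and follows essentially the same route as the paper: for $(i)$ the paper writes $\sigma_\mathfrak{b}(x)x=y-\sigma_\mathfrak{b}(y)-\alpha$ and expands $\mathfrak{b}(x(v),x(v))$ via the adjoint relation, which is exactly your computation with the ``$\mathfrak{b}(v,a(v))=0$ for $a\in\alt$'' step inlined rather than isolated. For $(ii)$ the paper simply says ``apply $(i)$ to a vector $v$ with $\mathfrak{b}(v,v)=1$'', so your explicit treatment of the degenerate branch $x(e)=0$ is in fact more careful than the original.
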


\begin{proof}
Let $\alpha=q_{\sigma_\mathfrak{b}}(x)$, so that $\sigma_\mathfrak{b}(x)x+\alpha\in\alt(\End_F(V),\sigma_\mathfrak{b})$.
Write $\sigma_\mathfrak{b}(x)x=y-\sigma_\mathfrak{b}(y)-\alpha$ for some $y\in\End_F(V)$.
For every $v\in V$  we have
\begin{align*}
\mathfrak{b}(x(v),x(v))&=\mathfrak{b}((\sigma_\mathfrak{b}(x)x)(v),v)=\mathfrak{b}((y-\sigma_\mathfrak{b}(y)-\alpha)(v),v)\\
&=\mathfrak{b}(y(v),v)-\mathfrak{b}(\sigma_\mathfrak{b}(y)(v),v)-\alpha\mathfrak{b}(v,v)\\
&=\mathfrak{b}(y(v),v)-\mathfrak{b}(v,y(v))-\alpha\mathfrak{b}(v,v)=\alpha\mathfrak{b}(v,v).
\end{align*}
This proves the first part.
The second part follows by applying $(i)$ to a vector $v\in V$ with $\mathfrak{b}(v,v)=1$.
\end{proof}

\begin{rem}
The converse of \cref{sym} does not hold in general.
To construct a counter-example, let $\lla\alpha,\beta\rra$ be an anisotropic bilinear Pfister form over $F$  and let $u$ be the diagonal matrix $\diag(1,\alpha,\beta,\alpha\beta+1)$.
Consider the involution $\sigma=\intt(u)\circ t$ on $M_4(F)$.
By \cref{21} we have $(M_4(F),\sigma)\simeq(\End_F(V),\sigma_\mathfrak{b})$, where $(V,\mathfrak{b})$ is the diagonal bilinear space $\langle1,\alpha,\beta,\alpha\beta+1\rangle$.
Since $\mathfrak{b}$ is anisotropic, the form $q_\sigma$ is also anisotropic by \cref{split} $(ii)$.
Hence, $\sigma$ is direct by \cref{direct}.
Let
\[x=\left(\begin{smallmatrix}
                0 & (\alpha\beta)^{-1} & \beta^{-1} & 0\\
                0 & 0 & 0 & \alpha(1+\alpha\beta)^{-1} \\
                1 & 0 & 0 & (1+\alpha\beta)^{-1} \\
                0 & 1+(\alpha\beta)^{-1} & 0 & 0
              \end{smallmatrix}\right)\in M_4(F).\]
Computations shows that $(\sigma(x)x+\beta^{-1}I_4)u\in\alt(M_4(F),t)$, where $I_4$ is the $4\times4$ identity matrix.
By \cite[(2.7)]{knus} we have  $\sigma(x)x+\beta^{-1}I_4\in\alt(M_4(F),\sigma)$, so $x\in S(M_4(F),\sigma)$.
On the other hand
\[xu=\left(\begin{smallmatrix}
               0 & \beta^{-1} & 1 & 0 \\
               0 & 0 & 0 & \alpha \\
               1 & 0 &  0 & 1 \\
               0 & \alpha+\beta^{-1} & 0 & 0
             \end{smallmatrix}\right)\notin\sym(M_4(F),t),\]
which implies that $x\notin\sym(M_4(F),\sigma)$, thanks to \cite[(2.7)]{knus}.
It follows that $x\in S(M_4(F),\sigma)\setminus \sym(M_4(F),\sigma)$.
\end{rem}

\begin{lem}\label{f2}
Let $t$ be the transpose involution on $M_n(F)$.
Then
\begin{itemize}
  \item[$(i)$] $Q(q_t)=F^2$.
  \item[$(ii)$] $q_t\simeq\langle1\rangle_q\perp(n^2-n)\times\langle0\rangle_q$.
\end{itemize}
\end{lem}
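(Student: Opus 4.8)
\emph{Proof plan.} The idea is to make everything explicit at the level of matrix entries, the one real simplification being the characteristic-two identity $\sum_k a_k^2=\bigl(\sum_k a_k\bigr)^2$. First I would note that $\alt(M_n(F),t)$ is exactly the space of alternating matrices, i.e.\ the symmetric matrices with zero diagonal: for any $x$ the matrix $x-t(x)=x+x^{\mathrm t}$ is symmetric with zero diagonal, and conversely any symmetric zero-diagonal matrix is of this form (take the strictly upper triangular part). Since $x^{\mathrm t}x$ is always symmetric, it lies in $F\oplus\alt(M_n(F),t)$ (where $F$ stands for $F\cdot I_n$) if and only if all of its diagonal entries coincide, and in that case $q_t(x)$ is that common value. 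Now the $i$-th diagonal entry of $x^{\mathrm t}x$ is $\sum_{k=1}^n x_{ki}^2=c_i(x)^2$, where $c_i(x):=\sum_{k=1}^n x_{ki}$ denotes the sum of the entries of the $i$-th column of $x$. Because squaring is injective on $F$ in characteristic two, this gives
\[
S(M_n(F),t)=\{x\in M_n(F)\mid c_1(x)=\dots=c_n(x)\},\qquad q_t(x)=c_1(x)^2 .
\]

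Part $(i)$ is then immediate: $q_t(x)=c_1(x)^2\in F^2$ shows $Q(q_t)\subseteq F^2$, while $q_t(0)=0$ and $q_t(\alpha I_n)=\alpha^2$ (note $\alpha I_n\in S(M_n(F),t)$) give the reverse inclusion, so $Q(q_t)=F^2$. For part $(ii)$, consider the $F$-linear functional $\ell:=c_1$ on $S(M_n(F),t)$; it is surjective since $\ell(I_n)=1$, and for $x\in S(M_n(F),t)$ one has $x=\ell(x)I_n+\bigl(x-\ell(x)I_n\bigr)$ with $x-\ell(x)I_n\in\ker\ell$, whence $S(M_n(F),t)=F\cdot I_n\oplus\ker\ell$. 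Moreover $\ker\ell=\{x\in M_n(F)\mid c_1(x)=\dots=c_n(x)=0\}$ is the kernel of the surjective $F$-linear map $M_n(F)\to F^n$, $x\mapsto(c_1(x),\dots,c_n(x))$, so $\dim_F\ker\ell=n^2-n$. Since $q_t=\ell^2$, it vanishes identically on $\ker\ell$ and takes the value $1$ at $I_n$; as $q_t$ is totally singular (\cref{algebra}$(ii)$), evaluating it on a basis of $S(M_n(F),t)$ adapted to the decomposition $F\cdot I_n\oplus\ker\ell$ yields $q_t\simeq\langle1\rangle_q\perp(n^2-n)\times\langle0\rangle_q$.

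The only points needing a little care are the two normalizations in the first step — that $\alt(M_n(F),t)$ consists precisely of the alternating matrices, and that a symmetric matrix belongs to $F\cdot I_n\oplus\alt(M_n(F),t)$ exactly when its diagonal is constant — together with the characteristic-two identity collapsing a sum of squares into a single square. Once these are in place the argument is routine linear algebra, and I do not expect a genuine obstacle; the alternative of routing everything through \cref{21} and \cref{split} (with $\mathfrak b=\langle1,\dots,1\rangle$) would also work for $(i)$ but seems less direct for the dimension count in $(ii)$.
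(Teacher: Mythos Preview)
Your argument is correct and rests on the same core computation as the paper's: the diagonal entries of $x^{\mathrm t}x$ are the squares $c_i(x)^2$ of the column sums, so membership in $S(M_n(F),t)$ and the value $q_t(x)$ are governed by the functionals $c_i$. The organization differs slightly. For $(i)$ the paper invokes \cref{21} and \cref{split}\,$(ii)$ (the route you mention at the end) to get $Q(q_t)\subseteq D(n\times\langle1\rangle)\cup\{0\}=F^2$, whereas you read this off directly from $q_t=c_1^2$. For $(ii)$ the paper first uses $(i)$ together with $\langle1,1\rangle_q\simeq\langle1,0\rangle_q$ to conclude abstractly that $q_t\simeq\langle1\rangle_q\perp k\times\langle0\rangle_q$ for some $k$, and then computes $k$ as the dimension of the isotropic locus $W=\{x:c_1(x)=\dots=c_n(x)=0\}$; you instead give the full description of $S(M_n(F),t)$ and exhibit $q_t$ as the square of a single linear functional, from which the diagonalization is immediate. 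Your version yields a little more (an explicit description of $S(M_n(F),t)$, not just of its radical) and avoids the auxiliary isometry $\langle1,1\rangle_q\simeq\langle1,0\rangle_q$, at the cost of not illustrating how \cref{split} feeds into such computations.
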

\begin{proof}
$(i)$ Clearly, we have $F^2\subseteq Q(q_t)$.
The converse inclusion follows from \cref{split} $(ii)$ and the fact that the transpose involution is the adjoint involution of the bilinear form $n\times\langle1\rangle$ (see \cref{21}).

$(ii)$ Using the first part and the isometry $\langle1,1\rangle_q\simeq\langle1,0\rangle_q$ we have $q_t\simeq\langle1\rangle_q\perp k\times\langle0\rangle_q$ for some non-negative integer $k$.
We claim that $k=n^2-n$.
Let $W=\{x\in S(M_n(F),t)\mid q_t(x)=0\}$.
Then $W$ is a $k$-dimensional vector space over $F$.
A matrix $x=(x_{ij})\in M_n(F)$ belongs to $W$ if and only if $x^tx\in\alt(M_n(F),t)$.
Note that $\alt(M_n(F),t)$ is the set of symmetric matrices with zero diagonal.
Hence, $x^tx\in\alt(M_n(F),t)$ if and only if
$\sum_{i=1}^nx_{ij}^2=0$ for $j=1,\cdots,n$, or equivalently
\begin{equation}\label{eq3}
\textstyle\sum_{i=1}^nx_{ij}=0,\quad {\rm for} \ j=1,\cdots,n.
\end{equation}
Hence, $W$ is the set of answers of the homogeneous system of linear equations (\ref{eq3}) with $n^2$ unknowns $x_{ij}$, $i,j=1,\cdots,n$.
This set is a vector space of dimension $n^2-n$ over $F$,  hence $k=n^2-n$.
\end{proof}

\begin{thm}\label{tran}
Let $(A,\sigma)$ be a central simple algebra of degree $n$ with ortho\-gonal involution over $F$.
If $A$ splits, then $(A,\sigma)\simeq(M_n(F),t)$ if and only if $q_\sigma\simeq\langle1\rangle_q\perp(n^2-n)\times\langle0\rangle_q$.
\end{thm}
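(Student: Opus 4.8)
\textit{Proof strategy.} The plan is to treat the two implications separately. The implication ``$(A,\sigma)\simeq(M_n(F),t)$ $\Rightarrow$ $q_\sigma\simeq\langle1\rangle_q\perp(n^2-n)\times\langle0\rangle_q$'' is immediate: by \cref{iso} an isomorphism $(A,\sigma)\simeq(M_n(F),t)$ restricts to an isometry $(S(A,\sigma),q_\sigma)\simeq(S(M_n(F),t),q_t)$, and $q_t\simeq\langle1\rangle_q\perp(n^2-n)\times\langle0\rangle_q$ by \cref{f2} $(ii)$. So the work is in the converse, which I would prove by a dimension count on $S(A,\sigma)$.

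Assume $q_\sigma\simeq\langle1\rangle_q\perp(n^2-n)\times\langle0\rangle_q$, so that $\dim_FS(A,\sigma)=n^2-n+1$; we may suppose $n\geq2$, the case $n=1$ being trivial. Since $A$ splits and $\sigma$ is orthogonal, $(A,\sigma)\simeq(\End_F(V),\sigma_\mathfrak{b})$ for a non-alternating symmetric bilinear space $(V,\mathfrak{b})$ of dimension $n$. Such a form admits an orthogonal basis (a standard fact in characteristic two), and rescaling $\mathfrak{b}$ does not change $\sigma_\mathfrak{b}$, so we may take $\mathfrak{b}=\langle\alpha_1,\dots,\alpha_n\rangle$ with $\alpha_1=1$; let $e_1,\dots,e_n$ be the corresponding orthogonal basis. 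Put $d=\dim_{F^2}Q(\mathfrak{b})$, so that $1\leq d\leq n$ and $1\in Q(\mathfrak{b})$. Since $v\mapsto\mathfrak{b}(v,v)$ is a totally singular quadratic form on $V$ with value set $Q(\mathfrak{b})$, its zero set $V_0:=\{v\in V\mid\mathfrak{b}(v,v)=0\}$ is an $F$-subspace of dimension $n-d$.

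Now I would estimate $\dim_FS(A,\sigma)$. By \cref{split} $(i)$, any $x\in S(A,\sigma)$ with $q_\sigma(x)=0$ satisfies $\mathfrak{b}(x(v),x(v))=0$ for all $v$, hence carries $V$ into $V_0$; thus $\dim_F\ker q_\sigma\leq n(n-d)$. By \cref{algebra}, $q_\sigma$ is additive and satisfies $q_\sigma(\mu x)=\mu^2q_\sigma(x)$ on the subspace $S(A,\sigma)$, so its fibres over $Q(q_\sigma)$ are the cosets of $\ker q_\sigma$ and $\dim_FS(A,\sigma)=\dim_F\ker q_\sigma+\dim_{F^2}Q(q_\sigma)$. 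Finally $Q(q_\sigma)\subseteq Q(\mathfrak{b})$ by \cref{split} $(ii)$ (applicable because $\mathfrak{b}$ represents $1$), so $\dim_{F^2}Q(q_\sigma)\leq d$. Combining,
\[n^2-n+1=\dim_FS(A,\sigma)\leq n(n-d)+d=n^2-(n-1)d\leq n^2-n+1,\]
so equality holds throughout and $(n-1)d=n-1$, i.e. $d=1$. As $1\in Q(\mathfrak{b})$, this gives $Q(\mathfrak{b})=F^2$, whence every $\alpha_i$ lies in $F^{\times2}$ and $\mathfrak{b}\simeq n\times\langle1\rangle$. By \cref{21} (with $u$ the identity matrix) the involution adjoint to $n\times\langle1\rangle$ is the transpose involution, so $(A,\sigma)\simeq(\End_F(V),\sigma_\mathfrak{b})\simeq(M_n(F),t)$.

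The step needing the most care is the identity $\dim_FS(A,\sigma)=\dim_F\ker q_\sigma+\dim_{F^2}Q(q_\sigma)$ together with the value $\dim_FV_0=n-d$: both rest on the observation that a totally singular quadratic form $q$ over $F$ induces a bijection from its domain, modulo its zero subspace, onto its value set $Q(q)$, and that this bijection carries an $F$-basis of the quotient to an $F^2$-basis of $Q(q)$ --- a consequence of the $F^2$-semilinearity $q(\mu z)=\mu^2q(z)$ and the additivity of $q$. Once the count is arranged this way, the inclusion $Q(q_\sigma)\subseteq Q(\mathfrak{b})$ supplied by \cref{split} $(ii)$ is precisely what collapses the chain of inequalities and pins down $d=1$.
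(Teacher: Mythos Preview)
Your argument is correct and shares the paper's decisive step: both embed $\ker q_\sigma$ into $\Hom_F(V,V_0)$ via \cref{split}\,(i), where $V_0=\{v\in V:\mathfrak{b}(v,v)=0\}$, and both deduce $\dim_FV_0=n-1$. The difference lies in the endgame. The paper does not diagonalise $\mathfrak{b}$; it invokes the structural decomposition $\mathfrak{b}\simeq\mathfrak{b}_{an}\perp\mathbb{M}(a_1)\perp\cdots\perp\mathbb{M}(a_m)\perp k\times\mathbb{H}$ of \cite{lagh}, reads off $\dim_FV_0=2k+m$, solves $2k+m=n-1$ together with $\dim\mathfrak{b}_{an}+2k+2m=n$, and finishes with the isometry $\langle\alpha\rangle\perp\mathbb{H}\simeq\langle\alpha,\alpha,\alpha\rangle$. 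You instead diagonalise $\mathfrak{b}$ at the outset (legitimate, since a nondegenerate symmetric bilinear form in characteristic two is diagonalisable exactly when it is non-alternating), introduce the single invariant $d=\dim_{F^2}Q(\mathfrak{b})$, and collapse the whole count to the chain $n^2-n+1\le n(n-d)+d\le n^2-n+1$, forcing $d=1$; the conclusion $\mathfrak{b}\simeq n\times\langle1\rangle$ is then immediate from the diagonal shape. Your route is more self-contained (no appeal to \cite{lagh}) and brings \cref{split}\,(ii) into play as the complementary bound $\dim_{F^2}Q(q_\sigma)\le d$; the paper's route uses only \cref{split}\,(i) but trades this for the external structure theorem. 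Note, incidentally, that the diagonalisability fact you quote ultimately rests on the same isometry $\langle\alpha\rangle\perp\mathbb{H}\simeq\langle\alpha,\alpha,\alpha\rangle$ the paper uses explicitly, so the two endgames are closer than they first appear.
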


\begin{proof}
The `only if' implication follows from \cref{iso} and \cref{f2}.

To prove the converse, we may identify $(A,\sigma)=(\End_F(V),\sigma_{\mathfrak{b}})$, where $(V,\mathfrak{b})$ is a symmetric non-alternating bilinear space over $F$.
By \cite[(2.1)]{lagh} there exist unique integers $m,k$ and $a_1,\cdots,a_m\in F$ such that
\begin{equation}\label{eq4}
\mathfrak{b}\simeq\mathfrak{b}_{an}\perp\mathbb{M}(a_1)\perp\cdots\perp\mathbb{M}(a_m)\perp k\times\mathbb{H},
\end{equation}
where $\mathfrak{b}_{an}$ is an anisotropic bilinear form, $\mathbb{M}(a_i)=\langle a_i,a_i\rangle$ and $\mathbb{H}$ is the hyperbolic plane.
Let
\[U=\{x\in S({\End}_F(V),\sigma_{\mathfrak{b}})\mid q_{\sigma_\mathfrak{b}}(x)=0\}\quad {\rm and} \quad W=\{v\in V\mid \mathfrak{b}(v,v)=0\}.\]
Then $U$ and $W$ are two vector spaces over $F$.
We have $\dim_F U=n^2-n$, because $q_\sigma\simeq\langle1\rangle_q\perp(n^2-n)\times\langle0\rangle_q$.
Also, by \cite[(2.1)]{lagh} we have $\dim_FW=2k+m$.
On the other hand \cref{split} $(i)$ implies that $x(V)\subseteq W$ for every $x\in U$, i.e., $U\subseteq \Hom_F(V,W)$.
By dimension count we get $\dim_FW\geqslant n-1$.
However, we have $W\neq V$, because $\mathfrak{b}$ is not alternating.
Hence, $\dim_FW=n-1$, i.e., $2k+m=n-1$.
By (\ref{eq4}) we have $n=\dim_F\mathfrak{b}_{an}+2k+2m$, which implies that $\dim_F\mathfrak{b}_{an}+m=1$.
It follows that either $\mathfrak{b}_{an}$ is trivial and $m=1$ or $\dim_F\mathfrak{b}_{an}=1$ and $m=0$.
Hence, either $\mathfrak{b}\simeq\langle\alpha,\alpha\rangle\perp k\times\mathbb{H}$ or $\mathfrak{b}\simeq\langle\alpha\rangle\perp k\times\mathbb{H}$  for some $\alpha\in F^\times$.
Using the isometry $\langle\alpha\rangle\perp\mathbb{H}\simeq\langle\alpha,\alpha,\alpha\rangle$ in \cite[(1.16)]{elman}, we get $\mathfrak{b}\simeq n\times\langle\alpha\rangle$.
Hence, $(A,\sigma)\simeq(M_n(F),t)$ by \cref{21}.
\end{proof}
The next example shows that for $n>2$ there exists a central simple algebra with orthogonal involution $(A,\sigma)$ of degree $n$ over $F$ such that $q_\sigma=\langle1\rangle_q$.
Note that for every algebra with involution $(A,\sigma)$ over $F$ we have $F\subseteq S(A,\sigma)$.
Hence, the form $\langle1\rangle_q$ is always a subform of the alternator form $q_\sigma$.

\begin{exm}\label{exm}
For $\alpha_1,\cdots,\alpha_n\in F^\times$ let $u$ be the diagonal $n\times n$ matrix $\diag(\alpha_1,\cdots,\alpha_n)$ and consider the involution $\sigma=\intt(u)\circ t$ on $M_n(F)$.
Let $\mathfrak{b}=\langle\alpha_1,\cdots,\alpha_n\rangle$.
By \cref{21} we have $(M_4(F),\sigma)\simeq(\End_F(V),\sigma_\mathfrak{b})$, where $V$ is an underlying vector space of $\mathfrak{b}$.
Let $x=(x_{ij})\in M_n(F)$.
We first claim that $x\in S(M_n(F),\sigma)$ with $q_\sigma(x)=\lambda\in F$ if and only if
\begin{align}\label{eq11}
\alpha_1^{-1}x_{1i}^2+\cdots+\alpha_n^{-1}x_{ni}^2=\alpha_i^{-1}\lambda \quad {\rm for}\ i=1,\cdots,n.
\end{align}
By definition $x\in S(M_n(F),\sigma)$ (with $q_\sigma(x)=\lambda$) if and only if
\[y:=\sigma(x)x+\lambda I_n\in\alt(M_n(F),\sigma).\]
By \cite[(2.4)]{knus} we have $\alt(M_n(F),\sigma)=u\cdot\alt(M_n(F),t)$, so $y\in \alt(M_n(F),\sigma)$ if and only if $u^{-1}y\in\alt(M_n(F),t)$.
Since $y\in\sym(M_n(F),\sigma)$ we have $u^{-1}y\in \sym(M_n(F),t)$ by \cite[(2.4)]{knus}.
Hence, $x\in S(M_n(F),\sigma)$ if and only if $u^{-1}y$ has zero diagonal.
Write $u^{-1}y=(y_{ij})$ for some $y_{ij}\in F$, $1\leqslant i,j\leqslant n$.
As $y=ux^tu^{-1}x+\lambda I_n$, computation shows that
\begin{align*}
y_{ii}=\alpha_1^{-1}x_{1i}^2+\cdots+\alpha_n^{-1}x_{ni}^2+\alpha_i^{-1}\lambda \quad {\rm for}\ i=1,\cdots,n.
\end{align*}
We have $y_{ii}=0$ if and only if (\ref{eq11}) is satisfied, as claimed.

Now, if $n>2$ and $\lla\alpha_1,\cdots,\alpha_n\rra$ is an anisotropic bilinear Pfister form, then (\ref{eq11}) implies that $\lambda=x_{11}^2=\cdots=x_{nn}^2$ and
$x_{ij}=0$ for $i\neq j$.
It follows that $\lambda\in F^2$ and $x$ is a scalar (matrix).
Hence, $S(A,\sigma)=F$ and $q_\sigma=\langle1\rangle_q$.
\end{exm}

\begin{rem}
\cref{exm} shows that the alternator form does not necessarily classify orthogonal involutions on a given central simple algebra.
Also, using \cref{exm} one can show that the inclusion $Q(q_{\sigma_{\mathfrak{b}}})\subseteq Q(\mathfrak{b})$ in \cref{split} could be strict.
Indeed, with the notation of \cref{exm} set $\mathfrak{b}'=\alpha_1\cdot\mathfrak{b}$.
Then $\mathfrak{b}'$ represents $1$ and $(\End_F(V),\sigma_\mathfrak{b})\simeq(\End_F(V),\sigma_{\mathfrak{b}'})$ (see \cite[p. 1]{knus}).
By \cref{iso} we have $Q(q_{\sigma_{\mathfrak{b}'}})=Q(q_{\sigma_\mathfrak{b}})=F^2$.
Since $\mathfrak{b}$ is anisotropic we have $\alpha_2\notin F^2$, hence $\alpha_2\in Q(\mathfrak{b}')\setminus  Q(q_{\sigma_{\mathfrak{b}'}})$.

\end{rem}

\section{Applications to totally decomposable involutions}
A {\it quaternion algebra} over $F$ is a central simple algebra of degree $2$.
An algebra with involution $(A,\sigma)$ over $F$ is called {\it totally decomposable} if it decomposes into tensor products of quaternion $F$-algebras with involution.
Let $(A,\sigma)\simeq\bigotimes_{i=1}^n(Q_i,\sigma_i)$ be a totally decomposable algebra with orthogonal involution over $F$.
By \cite[(2.23)]{knus} every $\sigma_i$ is an orthogonal involution.
Write $\disc\sigma_i=\alpha_iF^{\times2}\in F^\times/F^{\times2}$ for some $\alpha_i\in F^\times$, $i=1,\cdots,n$.
As in \cite{dolphin3} we denote the bilinear Pfister form $\lla\alpha_1,\cdots,\alpha_n\rra$ by $\mathfrak{Pf}(A,\sigma)$.
By \cite[(7.5)]{dolphin3}, $\mathfrak{Pf}(A,\sigma)$ is independent of the decomposition of $(A,\sigma)$.
Also, as observed in \cite{mn1} there exists a unique, up to isomorphism, $F$-algebra $\Phi(A,\sigma)\subseteq F\oplus\alt(A,\sigma)$ of dimension $2^n$ satisfying:
(i) $x^2\in F$ for $x\in\Phi(A,\sigma)$; (ii) the centralizer of $\Phi(A,\sigma)$ in $A$ coincides with $\Phi(A,\sigma)$ itself; (iii) $\Phi(A,\sigma)$ is generated, as an $F$-algebra by $n$ elements.
According to \cite[(5.5)]{mn1} the algebra $\Phi(A,\sigma)$ may be considered as an underlying vector space of $\mathfrak{Pf}(A,\sigma)$ such that
\[\mathfrak{Pf}(A,\sigma)(x,x)=x^2\quad {\rm for} \ x\in\Phi(A,\sigma).\]
Finally, let $x\in\Phi(A,\sigma)$ and set $\alpha=x^2\in F$.
Then $\sigma(x)x+\alpha=x^2+\alpha=0\in\alt(A,\sigma)$.
Hence, $\Phi(A,\sigma)\subseteq S(A,\sigma)$ and
\[q_\sigma(x)=\mathfrak{Pf}(A,\sigma)(x,x)=x^2\quad {\rm for}\ x\in\Phi(A,\sigma).\]

The following theorem complements some results in \cite{dolphin3} and \cite{mn1}.
\begin{thm}\label{isotropic}
For a totally decomposable algebra with orthogonal involution $(A,\sigma)$ over $F$ the following conditions are equivalent:
$(1)$ $\sigma$ is anisotropic.
$(2)$ $\sigma$ is direct.
$(3)$ $\mathfrak{Pf}(A,\sigma)$ is anisotropic.
$(4)$ $q_\sigma$ is anisotropic.
$(5)$ $\Phi(A,\sigma)$ is a field.
$(6)$ $S(A,\sigma)$ is a field.
$(7)$ $\Phi(A,\sigma)=S(A,\sigma)$.
$(8)$ $S(A,\sigma)\subseteq\sym(A,\sigma)$.
\\
In particular, if these conditions hold, the algebra $\Phi(A,\sigma)$ is uniquely determined.
\end{thm}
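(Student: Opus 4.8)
The plan is to establish the cycle of implications through the already-developed machinery, routing everything through the ``direct $\iff$ $q_\sigma$ anisotropic'' equivalence of \cref{direct} and the known Pfister-form results from \cite{dolphin3} and \cite{mn1}. First, the equivalences $(2)\Leftrightarrow(4)\Leftrightarrow(6)$ are immediate from \cref{direct}, which holds for any central simple algebra with orthogonal involution. The implication $(8)\Rightarrow(2)$ is \cref{sym}, and $(2)\Rightarrow(1)$ is the elementary fact (noted just before \cref{cor}) that every direct involution is anisotropic. So the real content is to close the loop back from anisotropy to directness, and to insert the Pfister-form conditions $(3)$, $(5)$, $(7)$ into the chain; this is where the total decomposability hypothesis enters.

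Next I would bring in the invariants $\mathfrak{Pf}(A,\sigma)$ and $\Phi(A,\sigma)$ recalled in the paragraph preceding the theorem. Since $\Phi(A,\sigma)\subseteq S(A,\sigma)$ with $q_\sigma|_{\Phi(A,\sigma)}=\mathfrak{Pf}(A,\sigma)(x,x)=x^2$, anisotropy of $q_\sigma$ forces anisotropy of $\mathfrak{Pf}(A,\sigma)$, giving $(4)\Rightarrow(3)$; and by the cited criterion $\dim_{F^2}Q(\mathfrak{b})=2^n$ for an anisotropic $n$-fold bilinear Pfister form, $(3)$ says $\Phi(A,\sigma)$, being a $2^n$-dimensional commutative $F$-algebra with $x^2\in F$ and $Q$ of maximal $F^2$-dimension, is a field — that is $(3)\Rightarrow(5)$. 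For $(5)\Rightarrow(1)$ I would invoke \cite[(6.5)]{mn1} (or the isotropy behaviour in \cite[(7.5)]{dolphin3}): over a splitting field $\sigma$ becomes adjoint to $\mathfrak{Pf}(A,\sigma)_K$, and $\sigma$ is anisotropic iff $\mathfrak{Pf}(A,\sigma)$ is; combined with $(3)\Leftrightarrow(5)$ this closes $(1)\Rightarrow(3)$ as well, so $(1),(2),(3),(4),(5),(6)$ are all equivalent.

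It then remains to fold in $(7)$ and $(8)$. For $(7)\Rightarrow(6)$: $\Phi(A,\sigma)$ is a field by its defining property $x^2\in F$ together with the maximal-dimension count, so $S(A,\sigma)=\Phi(A,\sigma)$ is a field — actually I would deduce $(5)\Rightarrow(7)$ and $(7)\Rightarrow(5)$ together: assuming $(5)$ (equivalently the conditions hold), \cref{direct} gives $x^2\in F$ for every $x\in S(A,\sigma)$, so $S(A,\sigma)$ is a commutative $F$-subalgebra of $A$ with $x^2\in F$, hence a subfield of the centralizer data; since $\Phi(A,\sigma)\subseteq S(A,\sigma)$ and $\Phi(A,\sigma)$ is self-centralizing in $A$ of the maximal possible dimension $2^n$, while $\dim_F S(A,\sigma)\le 2^n$ (this dimension bound is the point to check, e.g.\ via \cref{split} applied after splitting, as in \cref{f2} and \cref{tran}), equality forces $S(A,\sigma)=\Phi(A,\sigma)$, giving $(7)$. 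Finally $(7)\Rightarrow(8)$ because $\Phi(A,\sigma)\subseteq F\oplus\alt(A,\sigma)$ with $x^2\in F$ implies $\sigma(x)=x$ for its generators — more directly, $x^2=\alpha\in F$ and $x^2+\alpha\in\alt$ means $\sigma(x)x+x^2\in\alt$; but since $x^2\in F\subseteq\sym$ and $\sigma(x)x\in\sym$, one checks $\sigma(x)x=x^2$, and anisotropy of $q_\sigma$ (already available) then yields $\sigma(x)=x$, so $S(A,\sigma)=\Phi(A,\sigma)\subseteq\sym(A,\sigma)$. The last sentence of the theorem — uniqueness of $\Phi(A,\sigma)$ — follows because under these conditions $\Phi(A,\sigma)=S(A,\sigma)$, and $S(A,\sigma)$ is manifestly intrinsic to $(A,\sigma)$.

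The main obstacle I anticipate is the dimension bound $\dim_F S(A,\sigma)\le 2^n$ in the totally decomposable case (needed to upgrade $\Phi(A,\sigma)\subseteq S(A,\sigma)$ to equality in $(7)$): the inclusion $\Phi\subseteq S$ is easy, but controlling $S(A,\sigma)$ from above seems to require either passing to a splitting field and running a \cref{f2}/\cref{tran}-style linear-algebra count, or an induction on the number $n$ of quaternion factors using the tensor decomposition. Everything else is bookkeeping over the two reference papers plus \cref{direct} and \cref{sym}.
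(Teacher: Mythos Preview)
Your overall architecture matches the paper's: route $(2)\Leftrightarrow(4)\Leftrightarrow(6)$ through \cref{direct}, take $(8)\Rightarrow(2)$ from \cref{sym}, and borrow $(1)\Leftrightarrow(2)\Leftrightarrow(3)$ and $(1)\Leftrightarrow(5)$ from \cite{dolphin3} and \cite{mn1}. The differences are in how you handle $(7)$ and $(8)$, and there your ``main obstacle'' is a phantom.

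For $(6)\Rightarrow(7)$ (which you phrase as $(5)\Rightarrow(7)$), you correctly note that $\Phi(A,\sigma)$ is self-centralizing in $A$, but then you do not use this and instead go looking for a dimension bound $\dim_F S(A,\sigma)\le 2^n$. That bound is unnecessary. Once $(6)$ holds, $S(A,\sigma)$ is a field, hence commutative; since $\Phi(A,\sigma)\subseteq S(A,\sigma)$, every element of $S(A,\sigma)$ centralizes $\Phi(A,\sigma)$, so
\[
S(A,\sigma)\subseteq C_A\bigl(\Phi(A,\sigma)\bigr)=\Phi(A,\sigma),
\]
and equality follows. This is exactly the paper's one-line argument (``$\Phi(A,\sigma)$ is maximal commutative''). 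Your proposed route via a splitting field and a \cref{f2}/\cref{tran}-style count would be awkward: in the split isotropic situation $S$ can have dimension $n^2-n+1\gg 2^n$, so any such bound would have to already use anisotropy, at which point the centralizer argument is both shorter and cleaner.

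For $(7)\Rightarrow(8)$ you also work too hard (and your sketch, as written, does not actually conclude $\sigma(x)=x$ from $\sigma(x)x,\,x^2\in\sym(A,\sigma)$). The implication is immediate: by construction $\Phi(A,\sigma)\subseteq F\oplus\alt(A,\sigma)$, and in characteristic two $\alt(A,\sigma)\subseteq\sym(A,\sigma)$, so $\Phi(A,\sigma)\subseteq\sym(A,\sigma)$. Under $(7)$ this gives $S(A,\sigma)\subseteq\sym(A,\sigma)$ directly, which is why the paper calls it ``evident''.
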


\begin{proof}
The equivalences $(1)\Leftrightarrow(2)\Leftrightarrow(3)$ can be found in \cite{dolphin3} (see \cite[(6.1), (6.2) and (7.5)]{dolphin3}).
The equivalences $(2)\Leftrightarrow(4)\Leftrightarrow(6)$ are proved in \cref{direct} and $(1)\Leftrightarrow(5)$ follows from \cite[(6.6)]{mn1} and \cite[(6.2)]{dolphin3}.
Suppose that is $S(A,\sigma)$ is a field.
Since $\Phi(A,\sigma)\subseteq S(A,\sigma)$ and $\Phi(A,\sigma)$ is maximal commutative, we get $\Phi(A,\sigma)=S(A,\sigma)$.
This proves $(6)\Rightarrow (7)$.
The implication $(7)\Rightarrow (8)$ is evident and $(8)\Rightarrow (2)$ follows from \cref{sym}.
\end{proof}

\begin{lem}\label{Q}
Let $K/F$ be a separable quadratic extension and let $\mathfrak{b}$ be a symmetric bilinear form over $F$.
Then $D(\mathfrak{b}_K)\cap F=D(\mathfrak{b})$.
\end{lem}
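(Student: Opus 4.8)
The plan is to establish the nontrivial inclusion $D(\mathfrak{b}_K)\cap F\subseteq D(\mathfrak{b})$; the reverse inclusion $D(\mathfrak{b})\subseteq D(\mathfrak{b}_K)\cap F$ is immediate, since a nonzero $v\in V$ with $\mathfrak{b}(v,v)=\alpha\in F$ remains nonzero in $V_K=V\otimes_F K$ and still satisfies $\mathfrak{b}_K(v,v)=\alpha\in F$. For the forward direction I would use that $K/F$ is \emph{separable} quadratic, so by Artin--Schreier theory $K=F(\theta)$ with $\theta^2+\theta=a$ for some $a\in F$, and $\{1,\theta\}$ is an $F$-basis of $K$.

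Now take $0\neq w\in V_K$ with $\mathfrak{b}_K(w,w)=\alpha\in F$ and write $w=v_1+\theta v_2$ with $v_1,v_2\in V$. Expanding $\mathfrak{b}_K(w,w)$ and using that $\mathfrak{b}$ is symmetric, the cross terms $\theta\,\mathfrak{b}(v_1,v_2)+\theta\,\mathfrak{b}(v_2,v_1)=2\theta\,\mathfrak{b}(v_1,v_2)$ vanish in characteristic two, so
\[\mathfrak{b}_K(w,w)=\mathfrak{b}(v_1,v_1)+\theta^2\,\mathfrak{b}(v_2,v_2)=\bigl(\mathfrak{b}(v_1,v_1)+a\,\mathfrak{b}(v_2,v_2)\bigr)+\theta\,\mathfrak{b}(v_2,v_2),\]
using $\theta^2=\theta+a$. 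Comparing coordinates in the basis $\{1,\theta\}$, the hypothesis $\mathfrak{b}_K(w,w)=\alpha\in F$ forces $\mathfrak{b}(v_2,v_2)=0$ and hence $\alpha=\mathfrak{b}(v_1,v_1)$.

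It then remains to exhibit a \emph{nonzero} vector of $V$ witnessing $\alpha\in D(\mathfrak{b})$. If $v_1\neq 0$ we are done, as $\alpha=\mathfrak{b}(v_1,v_1)\in D(\mathfrak{b})$. If $v_1=0$, then $w=\theta v_2\neq 0$ forces $v_2\neq 0$, and the relation $\mathfrak{b}(v_2,v_2)=0$ shows $\mathfrak{b}$ is isotropic over $F$, so $0\in D(\mathfrak{b})$; since in this case $\alpha=\mathfrak{b}(v_1,v_1)=0$, again $\alpha\in D(\mathfrak{b})$. This completes the argument.

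I do not expect a real obstacle here: the computation is short and the only subtle point is the degenerate case $v_1=0$, where one must notice that the vanishing of $\mathfrak{b}(v_2,v_2)$ with $v_2\neq 0$ itself supplies the isotropy needed to conclude $0\in D(\mathfrak{b})$. It is worth remarking that separability is genuinely used: over an inseparable quadratic extension $K=F(\sqrt a)$ the coefficient of $\sqrt a$ in $\mathfrak{b}_K(w,w)$ is $2\sqrt a\,\mathfrak{b}(v_1,v_2)=0$ as well, so $\mathfrak{b}_K(w,w)=\mathfrak{b}(v_1,v_1)+a\,\mathfrak{b}(v_2,v_2)$ always lies in $F$ and the conclusion can fail (for instance for $\mathfrak{b}=\langle 1,a\rangle$ with $a\notin F^{\times2}$).
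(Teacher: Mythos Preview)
Your proof is correct and follows essentially the same route as the paper: write $K=F(\theta)$ with $\theta^2+\theta\in F$, expand $\mathfrak{b}_K(v_1+\theta v_2,v_1+\theta v_2)$, and use that the $\theta$-coefficient must vanish to deduce $\mathfrak{b}(v_2,v_2)=0$ and $\alpha=\mathfrak{b}(v_1,v_1)$. You are in fact slightly more careful than the paper in explicitly treating the case $v_1=0$, where $v_2\neq0$ and $\mathfrak{b}(v_2,v_2)=0$ witness $\alpha=0\in D(\mathfrak{b})$.
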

\begin{proof}
Clearly, we have $D(\mathfrak{b})\subseteq D(\mathfrak{b}_K)\cap F$.
Suppose that $\alpha\in D(\mathfrak{b}_K)\cap F$.
Let $V$  be an underlying vector space of $\mathfrak{b}$.
Write $K=F(\eta)$ for some $\eta\in K$ with $\delta:=\eta^2+\eta\in F$.
Then $\alpha=\mathfrak{b}_K(u\otimes1+v\otimes\eta,u\otimes1+v\otimes\eta)$ for some $u,v\in V$.
Thus
\begin{align*}
\alpha &=(\mathfrak{b}(u,u)+\delta\mathfrak{b}(v,v))+\eta(\mathfrak{b}(u,v)+\mathfrak{b}(v,u)+\mathfrak{b}(v,v))\\
&=(\mathfrak{b}(u,u)+\delta\mathfrak{b}(v,v))+\eta\mathfrak{b}(v,v).
\end{align*}
Since $\alpha\in F$ we get $\mathfrak{b}(v,v)=0$.
Hence, $\alpha=\mathfrak{b}(u,u)\in D(\mathfrak{b})$, proving the result.
\end{proof}

\begin{prop}\label{QP}
If $(A,\sigma)$ is a totally decomposable algebra with orthogonal involution over $F$, then $D(q_\sigma)=D(\mathfrak{Pf}(A,\sigma))$.
\end{prop}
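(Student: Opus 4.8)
The plan is to prove the two inclusions separately; the inclusion $D(\mathfrak{Pf}(A,\sigma)) \subseteq D(q_\sigma)$ is immediate. Indeed, as recorded just before \cref{isotropic}, one has $\Phi(A,\sigma) \subseteq S(A,\sigma)$ and $q_\sigma(x) = \mathfrak{Pf}(A,\sigma)(x,x)$ for every $x \in \Phi(A,\sigma)$; since $\Phi(A,\sigma)$ may be taken as an underlying vector space of $\mathfrak{Pf}(A,\sigma)$, every value represented by $\mathfrak{Pf}(A,\sigma)$ is already a value of $q_\sigma$ on $\Phi(A,\sigma)$, so $D(\mathfrak{Pf}(A,\sigma)) \subseteq D(q_\sigma)$.

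For the reverse inclusion I would pass to a carefully chosen splitting field of $A$. Writing $(A,\sigma) \simeq \bigotimes_{i=1}^n(Q_i,\sigma_i)$, each quaternion algebra $Q_i$ possesses a separable quadratic splitting field, so $A$ is split by a finite tower of separable quadratic extensions $F = L_0 \subseteq L_1 \subseteq \cdots \subseteq L_m = K$. For such a $K$, \cite[(7.5)]{dolphin3} supplies an isomorphism of algebras with involution $(A,\sigma)_K \simeq (\End_K(W),\sigma_\mathfrak{b})$ with $\mathfrak{b} = \mathfrak{Pf}(A,\sigma)_K$. As $\mathfrak{b}$ is a bilinear Pfister form it is non-alternating and represents $1$, so \cref{split} $(ii)$ gives $D(q_{\sigma_\mathfrak{b}}) \subseteq D(\mathfrak{b})$, and then \cref{iso} yields $D(q_{\sigma_K}) \subseteq D(\mathfrak{Pf}(A,\sigma)_K)$.

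It remains to transfer this information back to $F$. Since $\alt(A_K,\sigma_K) = \alt(A,\sigma) \otimes_F K$ and $\sigma_K$ is again orthogonal, every $x \in S(A,\sigma)$ gives $x \otimes 1 \in S(A_K,\sigma_K)$ with $q_{\sigma_K}(x \otimes 1) = q_\sigma(x)$, whence $D(q_\sigma) \subseteq D(q_{\sigma_K})$. As $D(q_\sigma) \subseteq F$, the previous paragraph now gives $D(q_\sigma) \subseteq D(\mathfrak{Pf}(A,\sigma)_K) \cap F$. Finally, applying \cref{Q} successively to the extensions $L_j/L_{j-1}$ and the bilinear forms $\mathfrak{Pf}(A,\sigma)_{L_{j-1}}$ collapses the right-hand side step by step down to $D(\mathfrak{Pf}(A,\sigma)_K) \cap F = D(\mathfrak{Pf}(A,\sigma))$. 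Combined with the first paragraph, this proves $D(q_\sigma) = D(\mathfrak{Pf}(A,\sigma))$.

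The only genuine subtlety is insisting that the splitting field $K$ be constructed as a tower of separable quadratic extensions, so that \cref{Q} applies at each rung; with an arbitrary splitting field the last step would break down. Everything else amounts to bookkeeping together with the elementary inclusion $D(q_{\sigma_\mathfrak{b}}) \subseteq D(\mathfrak{b})$ of \cref{split}, transported from the split situation back to $(A,\sigma)$ by scalar extension.
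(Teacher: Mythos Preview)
Your proposal is correct and follows essentially the same route as the paper: establish $D(\mathfrak{Pf}(A,\sigma))\subseteq D(q_\sigma)$ via $\Phi(A,\sigma)\subseteq S(A,\sigma)$, split $A$ by a tower of separable quadratic extensions, apply \cref{split}~$(ii)$ over the top field using \cite[(7.5)]{dolphin3}, and then descend with repeated applications of \cref{Q}. The only cosmetic difference is in how the tower is built: the paper constructs it inductively by, at each stage $K_i$, picking the first factor $Q_r$ not yet split and adjoining a maximal separable subfield of the division algebra $Q_r\otimes_F K_i$, whereas you take separable quadratic splitting fields of the individual $Q_i$ over $F$ and pass to their compositum; both produce the required tower.
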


\begin{proof}
Let $\mathfrak{b}=\mathfrak{Pf}(A,\sigma)$.
Since $\mathfrak{b}(x,x)=x^2=q_\sigma(x)$ for every $x\in\Phi(A,\sigma)\subseteq S(A,\sigma)$ we have $D(\mathfrak{b})\subseteq D(q_\sigma)$.
To prove the converse inclusion, let $(A,\sigma)\simeq\bigotimes_{i=1}^n(Q_i,\sigma_i)$ be a decomposition of $(A,\sigma)$ into quaternion algebras with involution.
For $i=0,\cdots,n$, define a field $K_i$ inductively as follows:
set $K_0=F$ and suppose that $K_i$ is defined.
If $K_i$ splits $A$, set $K_{i+1}=K_i$.
Otherwise, let $r$ be the minimal number for which $K_i$ does not split $Q_r$.
Then $Q_r\otimes_F K_i$ is a division algebra over $K_i$.
Let $K_{i+1}$ be a maximal separable subfield of $Q_r\otimes K_i$.
Note that for $i=0,\cdots,n-1$, $K_i$ may be identified with a subfield of $K_{i+1}$.
Also, either $K_{i+1}=K_i$ or $K_{i+1}/K_i$ is a separable quadratic extension.
Set $L:=K_n$, so that $A_L$ splits.

By \cite[(7.5)]{dolphin3}, we may identify $(A,\sigma)_L=(\End_L(V),\sigma_{\mathfrak{b}_L})$.
Using \cref{split} we get $D(q_{\sigma_L})\subseteq D(\mathfrak{b}_L)$.
If $x\in S(A,\sigma)$ and $\alpha=q_\sigma(x)$, then $x\otimes1\in S((A,\sigma)_L)$ and $q_{\sigma_L}(x\otimes1)=\alpha\otimes1$.
Hence, by identifying $F\otimes F\subseteq A\otimes F$ with $F$ we have $D(q_\sigma)\subseteq D(q_{\sigma_L})$.
It follows that $D(q_\sigma)\subseteq D(\mathfrak{b}_L)$.
By \Cref{Q} and induction on $n$ we have $D(\mathfrak{b}_L)\cap F=D(\mathfrak{b})\subseteq D(q_{\sigma_L})$.
Hence, $D(q_\sigma)\subseteq D(\mathfrak{b})$, proving the result.
\end{proof}

\begin{lem}\label{b}
Let $\mathfrak{b}$ and $\mathfrak{b}'$ be two isotropic bilinear $n$-fold Pfister forms over $F$.
Then $\mathfrak{b}\simeq\mathfrak{b}'$ if and only if $Q(\mathfrak{b})=Q(\mathfrak{b}')$.
\end{lem}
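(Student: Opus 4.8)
The forward implication is trivial: an isometry $\mathfrak{b}\simeq\mathfrak{b}'$ gives $D(\mathfrak{b})=D(\mathfrak{b}')$, hence $Q(\mathfrak{b})=Q(\mathfrak{b}')$. So assume $Q(\mathfrak{b})=Q(\mathfrak{b}')$; my plan is to reduce the converse to a statement about metabolic forms. The first point is that an \emph{isotropic} bilinear Pfister form is metabolic. Indeed, since bilinear Pfister forms are round, one may reorder the slots so that $\mathfrak{p}:=\lla\alpha_1,\cdots,\alpha_k\rra$ is anisotropic while $\mathfrak{p}\perp\alpha_{k+1}\mathfrak{p}\simeq\mathfrak{p}\otimes\langle1,\alpha_{k+1}\rangle$ is isotropic; comparing represented values and using that $D(\mathfrak{p})$ is a group then forces $\alpha_{k+1}\in D(\mathfrak{p})$, so $\alpha_{k+1}\mathfrak{p}\simeq\mathfrak{p}$ and $\mathfrak{p}\otimes\langle1,\alpha_{k+1}\rangle\simeq\mathfrak{p}\otimes\langle1,1\rangle$. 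As $\langle1,1\rangle$ is metabolic and a tensor product of any form with a metabolic form is metabolic, $\mathfrak{b}=\bigl(\mathfrak{p}\otimes\langle1,\alpha_{k+1}\rangle\bigr)\otimes\lla\alpha_{k+2},\cdots,\alpha_n\rra$ is metabolic. (Alternatively one may simply quote the fact that a bilinear Pfister form is either anisotropic or metabolic.) Thus $\mathfrak{b}$ and $\mathfrak{b}'$ are metabolic bilinear forms of dimension $2^n$ with the same value set.

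Next I would invoke the classification of metabolic symmetric bilinear forms in characteristic two: such a form is determined, up to isometry, by its dimension together with its value set $Q$ (equivalently, by its dimension and the associated totally singular quadratic form $v\mapsto\mathfrak{b}(v,v)$). Granting this, $\dim_F\mathfrak{b}=\dim_F\mathfrak{b}'=2^n$ and $Q(\mathfrak{b})=Q(\mathfrak{b}')$ force $\mathfrak{b}\simeq\mathfrak{b}'$, finishing the proof. Should this classification have to be established rather than cited, I would argue from the normal form $\mathfrak{b}\simeq\mathbb{M}(a_1)\perp\cdots\perp\mathbb{M}(a_m)\perp k\times\mathbb{H}$ of \cite[(2.1)]{lagh} (the anisotropic part being trivial here): one has $Q(\mathfrak{b})=\sum_i a_iF^2$, and after trading each $F^2$-dependence among the $a_i$ for one further hyperbolic plane one may take $\{a_1,\cdots,a_m\}$ to be an $F^2$-basis of $Q(\mathfrak{b})$, so that $m=\dim_{F^2}Q(\mathfrak{b})$ and $k$ are both determined by $\dim_F\mathfrak{b}$ and $Q(\mathfrak{b})$.

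The remaining, and I expect hardest, step is that the isometry class of $\mathbb{M}(a_1)\perp\cdots\perp\mathbb{M}(a_m)\perp k\times\mathbb{H}$ is independent of the chosen $F^2$-basis of $Q(\mathfrak{b})$. This is precisely where characteristic two is used in an essential way: Witt cancellation fails for symmetric bilinear forms (for example $\mathbb{M}(1)\perp\mathbb{M}(1)\simeq\mathbb{M}(1)\perp\mathbb{H}$ although $\mathbb{M}(1)\not\simeq\mathbb{H}$), so one cannot simply cancel off common summands; instead one must pass between bases by exhibiting suitable metabolic planes inside the form — each spanned by an isotropic vector together with one further vector — which absorb the discriminant discrepancies that a naive diagonal matching would leave behind. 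If the metabolic classification is available in the literature (e.g.\ in \cite{elman} or \cite{lagh}), this difficulty is simply circumvented by citation.
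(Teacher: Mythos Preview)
Your approach differs from the paper's and is more circuitous. The paper never leaves the Pfister world: it invokes the factorisation of an isotropic bilinear Pfister form as $\lla1\rra^r\otimes\mathfrak{c}$ with $\mathfrak{c}$ an \emph{anisotropic} bilinear Pfister form (\cite[p.~909]{arason}). Since $Q(\mathfrak{b})=Q(\mathfrak{c})$ and anisotropy of $\mathfrak{c}$ gives $\dim_{F^2}Q(\mathfrak{c})=2^{n-r}$, the hypothesis $Q(\mathfrak{b})=Q(\mathfrak{b}')$ forces $r=r'$ and $Q(\mathfrak{c})=Q(\mathfrak{c}')$; then \cite[(A.8)]{arason} (anisotropic bilinear Pfister forms with equal value sets are isometric) yields $\mathfrak{c}\simeq\mathfrak{c}'$, hence $\mathfrak{b}\simeq\mathfrak{b}'$. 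Two citations, no computations.

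Your route discards the Pfister hypothesis after the first step and seeks a general classification of metabolic symmetric bilinear forms by dimension and value set. That is a strictly stronger statement than you need, and---as you yourself flag---its crux (independence of the normal form $\mathbb{M}(a_1)\perp\cdots\perp\mathbb{M}(a_m)\perp k\times\mathbb{H}$ from the choice of $F^2$-basis of $Q(\mathfrak{b})$) is left unproved, contingent on a citation you are not sure exists. So as written there is a genuine gap. Even if that metabolic classification can be located or established, the detour through arbitrary metabolic forms does by hand exactly what the Pfister factorisation $\lla1\rra^r\otimes\mathfrak{c}$ packages in one line: the ``anisotropic kernel'' $\mathfrak{c}$ plays the role of your $F^2$-basis data, and the $\lla1\rra^r$ factor absorbs your hyperbolic planes. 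The lesson is that when the objects carry extra structure (here, being Pfister), a structure-preserving decomposition is usually shorter than a general classification theorem.
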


\begin{proof}
The `only if' implication is evident.
To prove the converse, choose positive integers $r$ and $r'$ and anisotropic bilinear Pfister forms $\mathfrak{c}$ and $\mathfrak{c}'$ over $F$
such that $\mathfrak{b}\simeq\lla1\rra^r\otimes\mathfrak{c}$ and $\mathfrak{b}'\simeq\lla1\rra^{r'}\otimes\mathfrak{c}'$, where $\lla1\rra^s$ is the $s$-fold Pfister form $\lla1,\cdots,1\rra$ (see \cite[p. 909]{arason}).
Since $\mathfrak{c}$ and $\mathfrak{c}'$ are anisotropic we have $\dim_{F^2}Q(\mathfrak{c})=2^{n-r}$ and $\dim_{F^2}Q(\mathfrak{c}')=2^{n-r'}$.
As $Q(\mathfrak{b})=Q(\mathfrak{c})$ and $Q(\mathfrak{b}')=Q(\mathfrak{c}')$, the assumption implies that $Q(\mathfrak{c})=Q(\mathfrak{c}')$, hence $r=r'$.
The conclusion now follows from \cite[(A.8)]{arason}.
\end{proof}

\begin{cor}\label{co}
Let $(A,\sigma)$ and $(A',\sigma')$ be totally decomposable algebras with isotropic orthogonal involution over $F$.
If $q_\sigma\simeq q_{\sigma'}$ then $\mathfrak{Pf}(A,\sigma)\simeq\mathfrak{Pf}(A',\sigma')$.
\end{cor}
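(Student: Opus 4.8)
The plan is to combine \cref{QP}, which identifies $D(q_\sigma)$ with $D(\mathfrak{Pf}(A,\sigma))$, with \cref{b}, which shows that an isotropic bilinear Pfister form is determined by its value set $Q(-)$. First I would observe that since $\sigma$ is isotropic, the equivalence $(1)\Leftrightarrow(3)$ of \cref{isotropic} tells us that $\mathfrak{Pf}(A,\sigma)$ is an isotropic bilinear Pfister form; likewise $\mathfrak{Pf}(A',\sigma')$ is isotropic. Thus both Pfister forms fall under the hypothesis of \cref{b}, and it suffices to prove that $Q(\mathfrak{Pf}(A,\sigma))=Q(\mathfrak{Pf}(A',\sigma'))$.

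Next I would translate the hypothesis $q_\sigma\simeq q_{\sigma'}$ into a statement about value sets: an isometry of quadratic forms preserves the set of represented values, so $D(q_\sigma)=D(q_{\sigma'})$, and hence $Q(q_\sigma)=Q(q_{\sigma'})$ after adjoining $0$. Now apply \cref{QP} to each side: $D(\mathfrak{Pf}(A,\sigma))=D(q_\sigma)=D(q_{\sigma'})=D(\mathfrak{Pf}(A',\sigma'))$, and therefore $Q(\mathfrak{Pf}(A,\sigma))=Q(\mathfrak{Pf}(A',\sigma'))$. Feeding this equality into \cref{b} yields $\mathfrak{Pf}(A,\sigma)\simeq\mathfrak{Pf}(A',\sigma')$, which is the desired conclusion.

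There is essentially no obstacle here: the corollary is a formal consequence of the three preceding results, and the only thing to be careful about is checking that the isotropy hypothesis on $\sigma$ and $\sigma'$ is genuinely needed and genuinely supplied — it is used precisely to invoke \cref{b}, whose statement requires \emph{isotropic} Pfister forms, and it is supplied by the equivalence $(1)\Leftrightarrow(3)$ in \cref{isotropic}. (If one dropped isotropy, the anisotropic case of \cref{b} would require the extra input of \cite[(A.8)]{arason} applied directly, but that is not the regime of this corollary.) So the proof is a three-line chain: isotropy plus \cref{isotropic} puts us in the setting of \cref{b}; the hypothesis plus \cref{QP} gives equality of the value sets; \cref{b} concludes.
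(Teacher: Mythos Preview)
Your proposal is correct and follows exactly the route the paper takes: the paper's proof is the single line ``The result follows from \cref{QP} and \cref{b},'' and you have merely unpacked that line, including the use of \cref{isotropic} $(1)\Leftrightarrow(3)$ to verify the isotropy hypothesis needed for \cref{b}.
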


\begin{proof}
The result follows from \cref{QP} and \cref{b}.
\end{proof}

\begin{thm}\label{main1}
Let $(A,\sigma)$ and $(A',\sigma')$ be totally decomposable algebras with isotropic orthogonal involution over $F$.
Then $(A,\sigma)\simeq(A',\sigma')$ if and only if $A\simeq A'$ and $q_\sigma\simeq q_{\sigma'}$.
\end{thm}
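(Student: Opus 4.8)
The plan is to reduce the classification to the case of anisotropic involutions together with a hyperbolic part, and to leverage the invariant $\mathfrak{Pf}$ together with the split-algebra classification already available. The `only if' direction is immediate from \cref{iso}. For the converse, suppose $A\simeq A'$ and $q_\sigma\simeq q_{\sigma'}$. By \cref{co} we already obtain $\mathfrak{Pf}(A,\sigma)\simeq\mathfrak{Pf}(A',\sigma')$, so the two involutions share both the underlying algebra and the Pfister invariant. The strategy is then to invoke a known result from \cite{dolphin3} or \cite{mn1} asserting that for totally decomposable algebras with orthogonal involution, the pair $\bigl(A,\mathfrak{Pf}(A,\sigma)\bigr)$ together with the isotropy (equivalently, Witt-type) data determines $(A,\sigma)$ up to isomorphism. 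Concretely, a totally decomposable orthogonal involution is either anisotropic — in which case $\mathfrak{Pf}$ is anisotropic by \cref{isotropic} and the result is the known classification of anisotropic totally decomposable involutions by $\mathfrak{Pf}$ — or isotropic, in which case one wants to peel off a hyperbolic plane.

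First I would treat the isotropic case by an induction on $\deg A$. Since $\sigma$ and $\sigma'$ are isotropic orthogonal involutions on $A\simeq A'$, each is hyperbolic after splitting off an anisotropic core; more precisely, by Witt decomposition for involutions (as in \cite{dolphin3}) one writes $(A,\sigma)\simeq(A_0,\sigma_0)\otimes(Q,\tau)$ with $(Q,\tau)$ a quaternion algebra carrying a hyperbolic orthogonal involution, and similarly for $(A',\sigma')$, where the hyperbolic factor is forced because the involution is isotropic. The key point is that the hyperbolic quaternion factor with orthogonal involution is unique up to isomorphism (it is $(M_2(F),\sigma_{\mathbb H})$ when split, and more generally determined by $Q$), so $A_0\simeq A_0'$. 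Next I would show $q_{\sigma_0}\simeq q_{\sigma_0'}$: the alternator form should be multiplicative under tensor product in the sense that tensoring with a hyperbolic quaternion factor has a controlled effect on $q_\sigma$ — using $F\subseteq S(A,\sigma)$ and \cref{algebra}, together with $\mathfrak{Pf}(A,\sigma)\simeq\lla\disc\tau\rra\otimes\mathfrak{Pf}(A_0,\sigma_0)$ and \cref{QP}, one recovers $Q(q_{\sigma_0})$ from $Q(q_\sigma)$ and the extra slot. Then the inductive hypothesis gives $(A_0,\sigma_0)\simeq(A_0',\sigma_0')$, and tensoring back with the (unique) hyperbolic quaternion factor yields $(A,\sigma)\simeq(A',\sigma')$.

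The base case and the genuinely delicate step will be handling the interface with the split situation: when $A$ splits, $(A,\sigma)\simeq(M_n(F),t)$ precisely corresponds to $q_\sigma\simeq\langle1\rangle_q\perp(n^2-n)\times\langle0\rangle_q$ by \cref{tran}, and one must check that the isotropic totally decomposable involutions on a split algebra with a prescribed $\mathfrak{Pf}$ are in bijection with such $q_\sigma$'s — equivalently that $q_\sigma$ together with $A$ recovers the full similarity class, not merely $\mathfrak{Pf}$. I expect the main obstacle to be proving that $q_\sigma$ genuinely refines $\mathfrak{Pf}$ enough in the isotropic case: \cref{QP} only gives $D(q_\sigma)=D(\mathfrak{Pf})$, so $q_\sigma$ and $\mathfrak{Pf}$ carry the same value set, and one must argue that the \emph{dimension} of $S(A,\sigma)$ (hence the number of $\langle0\rangle_q$ slots in $q_\sigma$) is pinned down by $A$ and $\mathfrak{Pf}$ in the isotropic case — this is where the structure of $\Phi(A,\sigma)$ inside $S(A,\sigma)$ from \cite{mn1} and the explicit computation in \cref{f2} must be combined. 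Once that dimension count is in place, \cref{b} and \cref{co} close the argument: $q_\sigma\simeq q_{\sigma'}$ forces $\mathfrak{Pf}(A,\sigma)\simeq\mathfrak{Pf}(A',\sigma')$, the Witt indices agree, and the classification of totally decomposable orthogonal involutions by the pair $\bigl(A,\mathfrak{Pf}\bigr)$ plus Witt index — available from \cite{dolphin3} — finishes the proof.
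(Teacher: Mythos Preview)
Your first paragraph already contains the paper's entire proof: \cref{iso} handles the forward direction, and for the converse you correctly observe that \cref{co} yields $\mathfrak{Pf}(A,\sigma)\simeq\mathfrak{Pf}(A',\sigma')$ and that one then invokes a classification result from \cite{mn1}. That is exactly what the paper does --- the converse is literally the one line ``\cref{co} and \cite[(6.5)]{mn1}'', since \cite[(6.5)]{mn1} states that for totally decomposable algebras with orthogonal involution, $(A,\sigma)\simeq(A',\sigma')$ if and only if $A\simeq A'$ and $\mathfrak{Pf}(A,\sigma)\simeq\mathfrak{Pf}(A',\sigma')$, with no further hypotheses.

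Everything from your second paragraph onward is therefore unnecessary, and parts of it are also not correct as stated. The classification by $(A,\mathfrak{Pf})$ does \emph{not} require any additional Witt-index or isotropy data; that is the whole content of \cite[(6.5)]{mn1}. Your proposed induction via a Witt-type decomposition $(A,\sigma)\simeq(A_0,\sigma_0)\otimes(Q,\tau)$ with $(Q,\tau)$ a ``hyperbolic orthogonal'' quaternion factor is not justified: an isotropic totally decomposable orthogonal involution need not split off such a factor in the way you describe, and in any case you never say what would guarantee $(A_0,\sigma_0)$ is again totally decomposable. The subsequent claim that $q_{\sigma_0}$ can be recovered from $q_\sigma$ is likewise only gestured at. Finally, your last paragraph worries that $q_\sigma$ might fail to determine the dimension of $S(A,\sigma)$ --- but $q_\sigma$ \emph{is} a quadratic form on $S(A,\sigma)$, so an isometry $q_\sigma\simeq q_{\sigma'}$ automatically forces $\dim S(A,\sigma)=\dim S(A',\sigma')$; no separate dimension count is needed. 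In short: trust your first instinct, cite \cite[(6.5)]{mn1}, and stop there.
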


\begin{proof}
The `only if' implication follows from \cref{iso} and the converse follows from \cref{co} and \cite[(6.5)]{mn1}.
\end{proof}

\begin{notation}
Let $(A,\sigma)$ be a totally decomposable algebra with anisotropic orthogonal involution over $F$.
By \cref{isotropic} we have $S(A,\sigma)=\Phi(A,\sigma)\subseteq F\oplus \alt(A,\sigma)$.
We denote the set $S(A,\sigma)\cap\alt(A,\sigma)$ by $S'(A,\sigma)$.
We also denote by $q_\sigma'$ the restriction of $q_\sigma$ to $S'(A,\sigma)$.
\end{notation}
Note that we have $S(A,\sigma)=F\oplus S'(A,\sigma)$ and $q_\sigma\simeq\langle1\rangle_q\perp q'_\sigma$, because $q_\sigma(1)=1$.
Also, as observed in \cite[p. 223]{mn1} one has an orthogonal decomposition $\Phi(A,\sigma)=F\perp S'(A,\sigma)$ with respect to $\mathfrak{Pf}(A,\sigma)$.
It follows that $q'_\sigma(x)=x^2=\mathfrak{b}'(x,x)$ for $x\in S'(A,\sigma)$, where $\mathfrak{b}'$ is the pure subform of $\mathfrak{Pf}(A,\sigma)$.
Hence, we have the following result
(recall that for a symmetric bilinear space $(V,\mathfrak{b})$ over $F$, there exists a unique totally singular quadratic form $\phi_\mathfrak{b}$ on $V$ given by $\phi_\mathfrak{b}(x)=\mathfrak{b}(x,x)$).
\begin{lem}\label{phi}
Let $(A,\sigma)$ be a totally decomposable algebra with anisotropic orthogonal involution over $F$ and let $\mathfrak{b}=\mathfrak{Pf}(A,\sigma)$.
Then $q_\sigma=\phi_\mathfrak{b}$ and $q'_\sigma=\phi_{\mathfrak{b}'}$, where $\mathfrak{b}'$ is the pure subform of $\mathfrak{b}$.
\end{lem}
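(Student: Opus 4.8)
The plan is to deduce the lemma directly from the identifications assembled in the discussion preceding it, so that essentially no new argument is needed. Since $\sigma$ is anisotropic, \cref{isotropic} gives $S(A,\sigma)=\Phi(A,\sigma)$, and by the results of \cite{mn1} recalled above the algebra $\Phi(A,\sigma)$ serves as an underlying vector space of $\mathfrak{b}=\mathfrak{Pf}(A,\sigma)$ with $\mathfrak{b}(x,x)=x^2$ for every $x\in\Phi(A,\sigma)$. Hence for every $x\in S(A,\sigma)$ one has $q_\sigma(x)=x^2=\mathfrak{b}(x,x)=\phi_{\mathfrak{b}}(x)$. As $\phi_{\mathfrak{b}}$ is by definition the unique totally singular quadratic form on this space with $\phi_{\mathfrak{b}}(x)=\mathfrak{b}(x,x)$, this already yields $q_\sigma=\phi_{\mathfrak{b}}$.

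For the statement about $q'_\sigma$ the one point that needs care is to identify the abstract pure subform $\mathfrak{b}'$, which is only determined up to isometry, with the concrete restriction of $\mathfrak{b}$ to $S'(A,\sigma)$. Here I would invoke the orthogonal decomposition $\Phi(A,\sigma)=F\perp S'(A,\sigma)$ with respect to $\mathfrak{b}$ noted in \cite[p. 223]{mn1}: since $\mathfrak{b}(1,1)=1$, the restriction of $\mathfrak{b}$ to $F\cdot 1$ is isometric to $\langle1\rangle$, so $\langle1\rangle\perp(\mathfrak{b}|_{S'(A,\sigma)})\simeq\mathfrak{b}$, and the uniqueness of the pure subform up to isometry (see \cite[p. 16]{arason}) forces $\mathfrak{b}'\simeq\mathfrak{b}|_{S'(A,\sigma)}$.

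Combining this with the first part, for $x\in S'(A,\sigma)$ we obtain $q'_\sigma(x)=q_\sigma(x)=x^2=\mathfrak{b}(x,x)=\phi_{\mathfrak{b}'}(x)$, which gives $q'_\sigma=\phi_{\mathfrak{b}'}$ and completes the proof. I do not expect a genuine obstacle here: all the substantive work — the equality $S(A,\sigma)=\Phi(A,\sigma)$ under anisotropy, the realization of $\Phi(A,\sigma)$ as the underlying space of $\mathfrak{Pf}(A,\sigma)$, and the orthogonal splitting off of the line $F\cdot 1$ — has already been carried out above, so the lemma is in effect a bookkeeping statement recording these facts in the language of the quadratic forms $\phi_{\mathfrak{b}}$ and $\phi_{\mathfrak{b}'}$.
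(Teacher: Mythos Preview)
Your proposal is correct and follows essentially the same approach as the paper: the lemma is presented there as an immediate consequence of the preceding discussion, which records precisely the identifications $S(A,\sigma)=\Phi(A,\sigma)$, $q_\sigma(x)=x^2=\mathfrak{b}(x,x)$, and the orthogonal splitting $\Phi(A,\sigma)=F\perp S'(A,\sigma)$ from \cite[p.~223]{mn1}. Your added remark on why the restriction of $\mathfrak{b}$ to $S'(A,\sigma)$ may be taken as the pure subform $\mathfrak{b}'$ makes explicit a point the paper leaves implicit, but otherwise the arguments coincide.
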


Let $(A,\sigma)$ be a totally decomposable algebra of degree $2^n$ with orthogonal involution over $F$.
Then there exists a set $\{v_1,\cdots,v_n\}$ consisting of units
such that $\Phi(A,\sigma)\simeq F[v_1,\cdots,v_n]$ and
$v_{i_1}\cdots v_{i_s}\in\alt(A,\sigma)$ for every $1\leqslant s\leqslant n$ and $1\leqslant i_1<\cdots<i_s\leqslant n$ (see \cite[(5.1)]{mn1} for more details).
As in \cite{mn1} we call $\{v_1,\cdots,v_n\}$ a {\it set of alternating generators} of $\Phi(A,\sigma)$.
According to \cite[(5.3) and (5.5)]{mn1}, if $\{v_1,\cdots,v_n\}$ is a set of alternating generators of $\Phi(A,\sigma)$ and $\alpha_i=v_i^2\in F^\times$ for $i=1,\cdots,n$, then $\mathfrak{Pf}(A,\sigma)\simeq\lla\alpha_1,\cdots,\alpha_n\rra$.
\begin{prop}\label{qsig}
Let $(A,\sigma)$ and $(A',\sigma')$ be totally decomposable algebras with anisotropic orthogonal involution over $F$.
Then $q'_\sigma\simeq q'_{\sigma'}$ if and only if $\mathfrak{Pf}(A,\sigma)\penalty 0\simeq\mathfrak{Pf}(A',\sigma')$.
\end{prop}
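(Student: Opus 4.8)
The plan is to translate the statement into one about bilinear Pfister forms by means of \cref{phi}. Put $\mathfrak{b}=\mathfrak{Pf}(A,\sigma)$ and $\mathfrak{c}=\mathfrak{Pf}(A',\sigma')$, and let $\mathfrak{b}'$, $\mathfrak{c}'$ be their pure subforms, so that $q'_\sigma=\phi_{\mathfrak{b}'}$ and $q'_{\sigma'}=\phi_{\mathfrak{c}'}$ by \cref{phi}. For the `if' direction, assume $\mathfrak{b}\simeq\mathfrak{c}$; since the pure subform of a bilinear Pfister form is unique up to isometry, $\mathfrak{b}'\simeq\mathfrak{c}'$, and any isometry of bilinear forms $\mathfrak{b}'\iso\mathfrak{c}'$ is at the same time an isometry $\phi_{\mathfrak{b}'}\iso\phi_{\mathfrak{c}'}$, whence $q'_\sigma\simeq q'_{\sigma'}$.

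For the `only if' direction, assume $q'_\sigma\simeq q'_{\sigma'}$. Since $q_\sigma\simeq\langle1\rangle_q\perp q'_\sigma$ and $q_{\sigma'}\simeq\langle1\rangle_q\perp q'_{\sigma'}$, this gives $q_\sigma\simeq q_{\sigma'}$, i.e. $\phi_\mathfrak{b}\simeq\phi_\mathfrak{c}$ by \cref{phi}. Isometric quadratic forms represent the same scalars, so $D(\mathfrak{b})=D(\phi_\mathfrak{b})=D(\phi_\mathfrak{c})=D(\mathfrak{c})$ and therefore $Q(\mathfrak{b})=Q(\mathfrak{c})$. As $\sigma$ and $\sigma'$ are anisotropic, \cref{isotropic} tells us that $\mathfrak{b}$ and $\mathfrak{c}$ are anisotropic bilinear Pfister forms. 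At this point I would appeal to the classification of anisotropic bilinear Pfister forms by their sets of represented values, namely \cite[(A.8)]{arason} (the very result invoked at the end of the proof of \cref{b}), to deduce $\mathfrak{b}\simeq\mathfrak{c}$, that is $\mathfrak{Pf}(A,\sigma)\simeq\mathfrak{Pf}(A',\sigma')$.

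Apart from invoking \cref{phi} and \cite[(A.8)]{arason}, the proof is essentially bookkeeping, so I do not expect a serious obstacle; the substantive ingredient is the theorem that an anisotropic bilinear Pfister form is determined up to isometry by $Q$. The only point requiring a moment's care is the passage between $q'_\sigma$ and $q_\sigma$: this needs no cancellation theorem, because $q_\sigma$ is anisotropic (by \cref{isotropic}, as $\sigma$ is) and already splits off $\langle1\rangle_q$ as $q_\sigma\simeq\langle1\rangle_q\perp q'_\sigma$.
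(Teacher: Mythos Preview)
Your proof is correct, and it takes a genuinely different route from the paper's for the `only if' direction. The paper argues constructively: it fixes a set of alternating generators $\{x_1,\dots,x_n\}$ of $\Phi(A,\sigma)$, transports them via the given isometry $f:(S'(A,\sigma),q'_\sigma)\iso(S'(A',\sigma'),q'_{\sigma'})$ to $x'_i=f(x_i)$, and then checks (using anisotropy of $q_{\sigma'}$) that $f$ actually carries each product $x_{i_1}\cdots x_{i_s}$ to $x'_{i_1}\cdots x'_{i_s}$. This forces $\{x'_1,\dots,x'_n\}$ to be a set of alternating generators of $\Phi(A',\sigma')$ with the same squares $\alpha_i$, whence $\mathfrak{Pf}(A',\sigma')\simeq\lla\alpha_1,\dots,\alpha_n\rra\simeq\mathfrak{Pf}(A,\sigma)$.

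By contrast, you pass immediately from $q_\sigma\simeq q_{\sigma'}$ to $Q(\mathfrak{b})=Q(\mathfrak{c})$ via \cref{phi} and then invoke \cite[(A.8)]{arason}. This is shorter and more conceptual; it also makes transparent that the statement is really a fact about anisotropic bilinear Pfister forms, with the involution-theoretic content entirely encoded in \cref{phi}. What the paper's argument buys in exchange is a little extra structural information: it exhibits the isometry $f$ as compatible with the multiplicative structure of $\Phi$, not merely with the quadratic form, and it avoids appealing to the external classification result at this point (even though \cite[(A.8)]{arason} is used elsewhere, in \cref{b}). Either way the `if' direction is the same, via \cref{phi} and the uniqueness of the pure subform.
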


\begin{proof}
The `if' implication follows from \cref{phi}.
To prove the converse, let  $\deg_FA=2^n$ and let $\{x_1,\cdots,x_n\}$ be a set of alternating generators of $\Phi(A,\sigma)$.
Set $\alpha_i=x_i^2\in F^\times$, so that $\mathfrak{Pf}(A,\sigma)\simeq \lla\alpha_1,\cdots,\alpha_n\rra$.
By dimension count the set
\[\{x_{i_1}\cdots x_{i_s}\mid 1\leqslant s\leqslant n \ {\rm and} \ 1\leqslant i_1<\cdots<i_s\leqslant n\},\]
is a basis of $S'(A,\sigma)$ over $F$.
Let $f:(S'(A,\sigma),q'_\sigma)\iso(S'(A',\sigma'),q'_{\sigma'})$ be an isometry and set $x'_i=f(x_i)\in S'(A',\sigma')$, $i=1,\cdots,n$.
Then
\begin{equation}\label{eq6}
x_i^{\prime2}=q'_{\sigma'}(x'_i)=q'_\sigma(x_i)=\alpha_i\quad {\rm for} \ i=1,\cdots,n.
\end{equation}
We claim that $f(x_{i_1}\cdots x_{i_s})=x'_{i_1}\cdots x'_{i_s}$ for $1\leqslant s\leqslant n$  and $1\leqslant i_1<\cdots<i_s\leqslant n$.
Since $S(A',\sigma')$ is an $F$-algebra we have $x'_{i_1}\cdots x'_{i_s}\in S(A',\sigma')$.
We also have
\begin{align}\label{eq7}
q_{\sigma'}(f(x_{i_1}\cdots x_{i_s})+x'_{i_1}\cdots x'_{i_s})&=q_\sigma(x_{i_1}\cdots x_{i_s})+q_{\sigma'}(x'_{i_1}\cdots x'_{i_s})\nonumber\\
&=(x_{i_1}\cdots x_{i_s})^2+(x'_{i_1}\cdots x'_{i_s})^2.
\end{align}
Since $S(A,\sigma)=\Phi(A,\sigma)$, $S(A,\sigma)$ is commutative.
Similarly, $S(A',\sigma')$ is also commutative.
Hence, using (\ref{eq7}) and (\ref{eq6}) we get
\[q_{\sigma'}(f(x_{i_1}\cdots x_{i_s})+x'_{i_1}\cdots x'_{i_s})=x_{i_1}^2\cdots x_{i_s}^2+x^{\prime2}_{i_1}\cdots x^{\prime2}_{i_s}=0.\]
As $q_{\sigma'}$ is anisotropic we get $f(x_{i_1}\cdots x_{i_s})=x'_{i_1}\cdots x'_{i_s}$, proving the claim.
In particular, we have $x'_{i_1}\cdots x'_{i_s}\in S'(A',\sigma')\subseteq\alt(A',\sigma')$ for $1\leqslant s\leqslant n$  and $1\leqslant i_1<\cdots<i_s\leqslant n$.
Hence, $\{x'_1,\cdots,x'_n\}$ is a set of alternating generators of $\Phi(A',\sigma')$.
The relation (\ref{eq6}) now implies that $\mathfrak{Pf}(A',\sigma')\simeq\lla\alpha_1,\cdots,\alpha_n\rra\simeq\mathfrak{Pf}(A,\sigma)$.
\end{proof}

Using \cref{iso}, \cite[(6.5)]{mn1} and \cref{qsig} we have the following analogue of \cref{main1}.
\begin{thm}\label{main2}
Let $(A,\sigma)$ and $(A',\sigma')$ be totally decomposable algebras with anisotropic orthogonal involution over $F$.
Then $(A,\sigma)\simeq(A',\sigma')$ if and only if $A\simeq A'$ and $q'_\sigma\simeq q'_{\sigma'}$.
\end{thm}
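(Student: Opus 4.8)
The plan is to deduce this exactly as \cref{main1} was deduced, replacing \cref{co} by \cref{qsig}: the statement follows by combining \cref{iso}, \cref{qsig}, and the classification \cite[(6.5)]{mn1} of totally decomposable algebras with orthogonal involution by the pair $(A,\mathfrak{Pf}(A,\sigma))$.

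For the `only if' direction I would argue as follows. Suppose $f\colon(A,\sigma)\iso(A',\sigma')$ is an isomorphism of algebras with involution; then in particular $A\simeq A'$. By \cref{iso}, $f$ restricts to an isometry $(S(A,\sigma),q_\sigma)\iso(S(A',\sigma'),q_{\sigma'})$. Since $f$ commutes with the involutions, it carries $\alt(A,\sigma)$ onto $\alt(A',\sigma')$, and hence carries $S'(A,\sigma)=S(A,\sigma)\cap\alt(A,\sigma)$ onto $S'(A',\sigma')=S(A',\sigma')\cap\alt(A',\sigma')$. Thus the restriction of $f$ to $S'(A,\sigma)$ is an isometry $(S'(A,\sigma),q'_\sigma)\iso(S'(A',\sigma'),q'_{\sigma'})$, so $q'_\sigma\simeq q'_{\sigma'}$.

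For the converse, assume $A\simeq A'$ and $q'_\sigma\simeq q'_{\sigma'}$. By \cref{qsig}, the isometry $q'_\sigma\simeq q'_{\sigma'}$ forces $\mathfrak{Pf}(A,\sigma)\simeq\mathfrak{Pf}(A',\sigma')$. Applying \cite[(6.5)]{mn1} to $(A,\sigma)$ and $(A',\sigma')$ — which asserts that two totally decomposable algebras with orthogonal involution are isomorphic as algebras with involution precisely when their underlying algebras are isomorphic and their Pfister invariants coincide — we conclude $(A,\sigma)\simeq(A',\sigma')$, as desired.

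I do not expect a real obstacle here, since the substantive content is already packaged into \cref{qsig} and \cite[(6.5)]{mn1}. The only point demanding a moment's care is the `only if' direction, where one must observe that an isomorphism of algebras with involution not merely induces an isometry of alternator forms via \cref{iso}, but also respects the decomposition $S(A,\sigma)=F\oplus S'(A,\sigma)$; this is immediate because such an isomorphism preserves $F$ and maps $\alt$ onto $\alt$.
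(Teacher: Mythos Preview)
Your proposal is correct and follows exactly the same approach as the paper, which simply cites \cref{iso}, \cref{qsig}, and \cite[(6.5)]{mn1}. Your additional observation that an isomorphism of algebras with involution carries $\alt(A,\sigma)$ onto $\alt(A',\sigma')$, and hence restricts to an isometry $q'_\sigma\simeq q'_{\sigma'}$, is the natural unpacking of how \cref{iso} is being applied here.
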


\footnotesize

\noindent{\sc A.-H. Nokhodkar, {\tt
    anokhodkar@yahoo.com},\\
Department of Pure Mathematics, Faculty of Science, University of Kashan, P.~O. Box 87317-51167, Kashan, Iran.}

\end{document}